 \NewDocumentCommand{\INTERVALINNARDS}{ m m }{
 	#1 {,} #2
 }
 \NewDocumentCommand{\interval}{ s m >{\SplitArgument{1}{,}}m m o }
 {
 	\IfBooleanTF{#1}{
 		\left#2 \INTERVALINNARDS #3 \right#4
 	}{
 		\IfValueTF{#5}{
 			#5{#2} \INTERVALINNARDS #3 #5{#4}
 		}{
 			#2 \INTERVALINNARDS #3 #4
 		}
 	}
 }
\newtheorem{thm}{Theorem}[section]
\newtheorem{maintheorem}{Theorem}
\newtheorem{cor}{Corollary}[section]
\newtheorem{lem}{Lemma}[section]
\newtheorem{prop}{Proposition}[section]
\newtheorem{rem}{Remark}[section]
\newtheorem{obs}[thm]{Observation}
\numberwithin{equation}{section}
\date{}
\begin{document}
\title{Newton's method applied to rational functions: Fixed points and Julia sets}
\author[1]{Tarakanta Nayak 
    \footnote{tnayak@iitbbs.ac.in}}
\author[2]{Soumen Pal \footnote{soumen.pal.new@gmail.com}}
\author[1]{Pooja Phogat  \footnote{Corresponding author, poojaphogat174acad@gmail.com}}
\affil[1]{Department of Mathematics, 
		Indian Institute of Technology Bhubaneswar, India}
\affil[2]{Department of Mathematics, 
		Indian Institute of Technology Madras, India}
	\date{}
\maketitle
\begin{abstract}
For a rational function  $R$, let  
$N_R(z)=z-\frac{R(z)}{R'(z)}.$ Any such $N_R$ is referred to as a  Newton map. We determine all the rational functions $R$ for which $N_R$ has exactly two attracting fixed points, one of which is an exceptional point. Further, if all the repelling fixed points of any such Newton map are with multiplier $2$, or the multiplier of the non-exceptional attracting fixed point is at most $\frac{4}{5}$, then its Julia set is shown to be connected.  
If a polynomial $p$ has exactly two roots, is unicritical but not a monomial, or $p(z)=z(z^n+a)$ for some $a \in \mathbb{C}$ and $n \geq 1$, then we have proved that the Julia set of $N_{1/p}$ is totally disconnected. For the McMullen map $f_{\lambda}(z)=z^m - \frac{\lambda}{z^n}$, $\lambda \in \mathbb{C}\setminus \{0\}$ and $m,n \geq 1$, we have proved that the Julia set of $N_{f_\lambda}$ is connected and is invariant under rotations about the origin of order $m+n$. All the connected Julia sets mentioned above are found to be locally connected.
\end{abstract}
\textit{Keyword:}
Newton's method; Fixed points; Conjugacy; Julia sets.\\
AMS Subject Classification: 37F10, 65H05

\section{Introduction}
The Newton's method  applied to a polynomial $p: \widehat{\mathbb{C}} \to  \widehat{\mathbb{C}}$ is defined by $N_p(z)=z-\frac{p(z)}{p'(z)}.$
It is a well-known root-finding method that has been extensively studied (see, for example, \cite{DrachSchleicher2022,HSS2001,przy-1989,lei}). The Newton's method applied to $f = pe^q$, where $p$ and $q$ are polynomials,  is a rational map and has also been explored (see \cite{Haruta1999,Mamayusupov2019,scl}). This is a generalization in the sense that $f$ is a polynomial whenever $q$ is constant. This article deals with the Newton's method applied to rational functions, which is a generalization in a different direction.
\par 
 The Julia set of a rational function $R$, denoted by $\mathcal{J}(R)$,  is the set of all points in a neighbourhood of which the family of functions $\{R^n\}_{n \geq 0}$ is not equicontinuous. The Fatou set of $R$, denoted by $\mathcal{F}(R)$, is the complement of $\mathcal{J}(R)$ in $\widehat{\mathbb{C}}$.  A point $z_0 \in \widehat{\mathbb{C}}$ is called a fixed point of $R$ if $R(z_0)=z_0$. For $z_0= \infty$, we use the local chart $w=\frac{1}{z}$. The multiplier of $z_0$ as a fixed point of $R$ is defined as $\lambda :=\lambda_{z_0}=R'(z_0)$.
 The fixed point $z_0$ is called repelling, indifferent, or attracting if $|\lambda|>1,$ $|\lambda| =1$ or $|\lambda|<1$, respectively. An attracting fixed point with multiplier $0$ is called superattracting. A  repelling or indifferent fixed point with multiplier $1$  is called weakly repelling. Each rational function with degree at least two has at least one weakly repelling fixed point (see Corollary 12.7, \cite{Milnor_book}). There is a beautiful result by Shishikura connecting the Julia set and the fixed points of a rational function.
 \begin{thm}[\cite{Shishikura2009}]
If the Julia set $\mathcal{J}(R)$ of a rational function $R$ with degree at least two is disconnected, then there exist at least two weakly repelling fixed points lying on two different components of the Julia set. In particular, if $R$ has exactly one repelling fixed point, then $\mathcal{J}(R)$ is connected.
 	\label{shishikura}
 \end{thm}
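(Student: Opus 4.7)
The plan is to follow the quasiconformal-surgery strategy introduced by Shishikura. Suppose $\mathcal{J}(F)$ is disconnected and fix two distinct components $J_0$ and $J_1$. Note that weakly repelling fixed points automatically lie on $\mathcal{J}(F)$: repelling fixed points belong to the Julia set, and an indifferent fixed point with multiplier $1$ is parabolic, hence also on $\mathcal{J}(F)$. So it suffices to produce one such fixed point inside a topological disk $K_0$ that surrounds $J_0$ and another inside a disjoint disk $K_1$ that surrounds $J_1$.

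The first step is a separation and reduction to a polynomial-like setting. Since the components of $\mathcal{J}(F)$ are compact and pairwise separated by the Fatou set, choose Jordan curves $\gamma_0, \gamma_1 \subset \mathcal{F}(F)$ whose bounded complementary disks $K_0, K_1 \subset \widehat{\mathbb{C}}$ are disjoint and contain $J_0, J_1$ respectively. By considering a sufficiently large forward iterate $F^n$ together with an appropriate pull-back, the aim is to realise $F^n$ on a neighbourhood of $K_0$ as a polynomial-like map $F^n \colon U \to V$ of degree at least two, with $\overline{U} \Subset V$ and with filled Julia set containing $J_0$. Once this is in place, the holomorphic fixed-point (Lefschetz) formula $\sum \frac{1}{1 - (F^n)'(z)} = $ (a topological quantity depending only on the degree) applied to the fixed points of $F^n$ in $U$ forces at least one fixed point with multiplier of modulus at least one; that is, a weakly repelling periodic point. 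By taking $n$ to be the minimal period and using that a weakly repelling periodic cycle of $F^n$ contains a weakly repelling fixed point of $F$, one obtains a weakly repelling fixed point of $F$ inside $K_0$, necessarily lying on $J_0$. Running the identical argument inside $K_1$ gives the second weakly repelling fixed point on $J_1$.

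The main obstacle, and indeed the technical heart of Shishikura's argument, is constructing the polynomial-like restriction when $K_0$ meets Fatou components that are not attracting basins, namely Siegel disks, Herman rings, or parabolic basins. In such cases the required forward containment $\overline{U} \Subset V$ cannot be arranged directly, because these Fatou components are only weakly invariant under $F$. The device is to perform a quasiconformal surgery on $F$ that converts such obstructing Fatou components into attracting basins, yielding a new rational map $\widetilde{F}$ whose Julia-set component structure near $J_0$ matches that of $F$. The polynomial-like argument now applies to $\widetilde{F}$, and the weakly repelling fixed point found for $\widetilde{F}$ corresponds via the quasiconformal conjugacy, away from the surgery region, to one of $F$. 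A careful case analysis, including the possibility that several components of $\mathcal{J}(F)$ permute under the dynamics, is required to make this transfer rigorous.
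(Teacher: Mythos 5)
The paper cites this theorem from \cite{Shishikura2009} without reproducing its proof, so there is no in-paper argument to compare against; I can only evaluate your sketch on its own terms. It does capture the broad architecture of Shishikura's argument (separation of Julia components, polynomial-like restriction, holomorphic index formula, and quasiconformal surgery to convert rotation domains into attracting basins), but there is a genuine gap at the step where you pass to a large iterate $F^n$. The index argument applied to a polynomial-like restriction of $F^n$ on $U$ produces a weakly repelling fixed point $z_0$ of $F^n$, but $z_0$ is in general a periodic point of $F$ of period $n$, and the cycle $\{z_0, F(z_0), \ldots, F^{n-1}(z_0)\}$ contains no fixed point of $F$ at all when $n>1$. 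Your sentence ``a weakly repelling periodic cycle of $F^n$ contains a weakly repelling fixed point of $F$'' is simply false. Shishikura's theorem asserts the existence of weakly repelling \emph{fixed} points of $F$, and his proof is specifically engineered to avoid this pitfall: after the surgery step, he produces a polynomial-like restriction of $F$ itself (not of an iterate), so the index computation directly yields a fixed point of $F$. To close the gap you would either have to explain why $n=1$ can always be arranged in your pull-back construction — which is in effect the hard technical core of Shishikura's argument — or accept the weaker conclusion about weakly repelling periodic orbits, which is not what the theorem says.

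A smaller imprecision: ``multiplier of modulus at least one'' is not the same as ``weakly repelling.'' Irrationally or rationally indifferent fixed points with multiplier $e^{2\pi i\theta}$, $\theta\neq 0$, have modulus exactly one but are not weakly repelling. What the index argument actually uses is the estimate $\mathrm{Re}\bigl(\tfrac{1}{1-\lambda}\bigr)\geq \tfrac12$ for all $\lambda$ with $|\lambda|\leq 1$ and $\lambda\neq 1$, so that if every fixed point of a degree-$d$ polynomial-like map ($d\geq 2$) were non-weakly-repelling, the total index would have real part $\geq d/2$, contradicting the topological constraint; it is this refinement that lands you on weakly repelling points specifically rather than merely non-attracting ones.
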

 For the Newton map $N_{p e^q}$, the point at $\infty$ is either a repelling fixed point (whenever $q$ is constant) or an indifferent fixed point with multiplier $1$ (whenever $q$ is non-constant). All other fixed points are attracting. A proof of this fact can be found in  Proposition 1, \cite{Haruta1999}, as well as in Proposition 2.11, \cite{scl}. It follows from Theorem \ref{shishikura} that the Julia set $\mathcal{J}(N_{p e^q})$ is connected. The situation is very much different for the Newton's method applied to a  non-polynomial rational function. To proceed with the discussion, let $R$ be a rational function and, 
 \begin{equation*}\label{formula_N_R}
 N_R(z)=z-\frac{R(z)}{R'(z)}.
 \end{equation*}
Throughout this article, we refer to a rational map as a Newton map if it is $N_R$ for some rational function $R$, where a polynomial is also considered as a rational function. A Newton map is called quadratic or cubic if its degree (not the degree of $R$ from which it arises) is two or three, respectively. Further, the degree of a Newton map is taken to be at least two unless stated otherwise. 
\par Each pole of $R$ is a repelling fixed point of $N_R$ (see Lemma~\ref{NFP}). If $N_R$ has at least two repelling fixed points, then it is not straightforward any more to determine the connectivity of $\mathcal{J}(N_R)$ using 
 Theorem~\ref{shishikura}. Indeed, one needs to know whether there are two weakly repelling fixed points lying on two different components of the Julia set or not. This article primarily deals with the connectedness of the Julia set of $N_R$ for various types of $R$.
\par 
The notion of conjugacy is required to proceed with our discussion. Two rational functions $R_1$ and $R_2$ are said to be (conformally) conjugate if there is a M\"{o}bius map $\phi$ such that $\phi^{-1} \circ R_1 \circ \phi =R_2$. In this case, we also say that $R_1$ is $R_2$ up to conjugacy. A point $z_0$ is a fixed point of $R_2$ if and only if $\phi(z_0)$ is a fixed point of $R_1$. More importantly, the multiplier of $z_0$ is the same as that of $\phi(z_0)$. Further, the Julia set of $R_1$ is the $\phi$-image of the Julia set of $R_2$. As $\phi$ takes connected sets to connected sets, the Julia set of $R_1$ is connected if and only if the Julia set of $R_2$ is connected.
\par  
The study of Newton maps $N_R$, where $R$ is a non-polynomial rational function, was initiated by Barnard et al., who proved that if a quadratic Newton map is conjugate to a polynomial, then its Julia set is connected (see Corollary 3.4, \cite{Barnard2019}). Note that  there is a rational function $R$ with any given degree  such that $N_R$ is quadratic (see Theorem 3.1,~\cite{Nayak_Pal2022}). Later, Nayak and Pal considered all quadratic Newton maps (even if these are not conjugate to any polynomial) and proved that there are only two quadratic Newton maps up to conjugacy. More precisely, each quadratic Newton map is conformally conjugate to $$N_1 (z)=\frac{(d_1 + d_2 -1)z^2 +(1- d_1)z}{(d_1+d_2)z-d_1}~~ \text{ or }~~ N_2 (z)=\frac{(e_1 + e_2 +1)z^2 +(-1- e_1)z}{(e_1 +e_2)z-e_1}$$ for some positive integers $d_1, d_2, e_1,e_2$ (see Remark 3.2, \cite{Nayak_Pal2022}). The Julia set of $N_1$ is a Jordan curve, whereas that of $N_2$ is totally disconnected (see Theorem 1.1, \cite{Nayak_Pal2022}). They also proved that if a cubic Newton map is conjugate to a polynomial, then its Julia set is connected (see Theorem 1.2, \cite{Nayak_Pal2022}).
\par 
First, we look for Newton maps of arbitrary degree with connected Julia sets. For every polynomial $q$, $N_q$ has exactly one repelling fixed point. If $p$ is a monic polynomial with $p(0)\neq 0$ and $\deg(p) \leq k+1$ for some $k \geq 1$, then $N_{p(z)/z^k}$ has exactly one repelling fixed point, namely $0$. This follows from Lemma~\ref{NFP}. Proposition~\ref{conj_to_poly} proves that every Newton map with exactly one repelling fixed point is actually conjugate to $N_q$ for some polynomial $q$. This gives a class of non-polynomial rational functions whose Newton's method have connected Julia sets. We consider Newton maps with two attracting fixed points, one of which is exceptional.  
For a rational function $R$, a point $ w_0 \in \widehat{\mathbb{C}} $ is said to be an exceptional point if its backward orbit $ \{ z : R^n(z) = w_0 \text{ for some positive integer } n \}$ contains at most two elements. At the other extreme are Newton maps with a single attracting fixed point. This is a necessary condition for a totally disconnected Julia set. 

\par A Newton map with an exceptional point is conjugate to a polynomial. All quadratic and cubic Newton maps that are conjugate to some polynomial are already determined in Theorem 3.4 and Table 1,~\cite{Nayak_Pal2022} respectively.    
There can be at most two exceptional points. We have shown in Proposition~\ref{quadratic-newton} that if a Newton map has two exceptional points, then it is  $z^2$ up to conjugacy. If a Newton map has a single exceptional point, it must be a superattracting fixed point. It now follows from Corollary \ref{ext_one_attr}, proved in Section 2, that the Newton map has at least one more attracting fixed point whenever it has an exceptional point.
The following theorem provides a necessary condition for a Newton map to have exactly two attracting fixed points, one of which is exceptional.

\begin{maintheorem}
If a Newton map has exactly two attracting fixed points, one of which is an exceptional point then it is  conjugate to $N_R$, where  $R(z)=\frac{z^d}{p(z)},$ for some $d \geq 1$ and some monic polynomial $p$   with  $p(0) \neq 0$ and $\operatorname{deg}(p)=d$. Moreover, we have the following up to conjugacy.
	\begin{enumerate}
		\item If $p$ is generic, then there is exactly one Newton map, namely  $\frac{z^{d+1} +(d-1)z}{d}$.
		\item If $d=3,4$ or $5$, then there are exactly three, five or eight Newton maps, respectively.
  \end{enumerate}
	\label{two-attracting-fixedponts}
\end{maintheorem}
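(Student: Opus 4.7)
The plan is two-stage: first derive the structural form $R = z^d/p(z)$ from the hypothesis, then classify the resulting Newton maps up to M\"obius conjugacy by analyzing moment conditions on the roots of $p$. For the reduction, let $\zeta$ denote the exceptional attracting fixed point. Since $\zeta$ is the unique exceptional point of $N_R$, it is totally invariant: $N_R^{-1}(\zeta) = \{\zeta\}$. A M\"obius conjugation sending $\zeta$ to $\infty$ produces a Newton map whose only pole is $\infty$, hence a polynomial; an additional affine conjugation places the other attracting fixed point at $0$. The condition that $N_R = z - R/R'$ be a polynomial is equivalent to $R/R'$ being a polynomial, and partial-fraction analysis of $R'/R$ forces $R(z) = c\prod_j (z-\gamma_j)^{n_j}$ with $n_j \in \mathbb{Z}$ and $\sum_j n_j = 0$ (vanishing residue of $R'/R$ at $\infty$, as $\deg(R/R')\geq 2$). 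Since finite zeros of $R$ are precisely the finite attracting fixed points of $N_R$ by Lemma~\ref{NFP}, the hypothesis of a unique non-exceptional attractor forces exactly one $\gamma_j$ with $n_j > 0$, placed at $0$ with exponent $d \geq 1$; the remaining $\gamma_j$ are the poles $\beta_i$ of $R$ with exponents $-b_i < 0$, and $\sum_j n_j = 0$ gives $\sum_i b_i = d$. Setting $p(z) = \prod_i (z-\beta_i)^{b_i}$ and scaling so that $p$ is monic yields $R = z^d/p$ with $\deg p = d$ and $p(0) \neq 0$, proving the first assertion.

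For the classification, the multipliers $(d-1)/d$ at $0$, $0$ at $\infty$, and $(b_i+1)/b_i > 1$ at each $\beta_i$ are mutually distinct (for $d \geq 2$), so any conjugation preserving $N_R$ in this normalized form must fix $0$ and $\infty$, hence is a scaling $z \mapsto cz$ acting by $\beta_i \mapsto \beta_i/c$. The polynomiality of $R/R'$ translates, via the partial-fraction analysis, into the moment system
\begin{equation*}
\sum_{i=1}^{s} b_i\, \beta_i^{\ell} = 0 \qquad (\ell = 1, \ldots, s-1),
\end{equation*}
where $s$ is the number of distinct roots of $p$. For the generic partition (all $b_i = 1$, so $s = d$), Newton's identities convert these into $\sigma_1 = \cdots = \sigma_{d-1} = 0$, forcing $p(z) = z^d - \alpha$ for some nonzero $\alpha$; scaling normalizes $\alpha = 1$, and a direct calculation gives $N_R(z) = (z^{d+1} + (d-1)z)/d$, proving part (1). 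For part (2) one enumerates all partitions of $d \in \{3, 4, 5\}$ and, for each, counts the orbits of the solution set of the moment system under scaling and under the stabilizer of $(b_1, \ldots, b_s)$ in $S_s$; since distinct partitions yield distinct multiplier multisets and hence non-conjugate Newton maps, summing these orbit counts over all partitions of $d$ gives the totals $3$, $5$, $8$.

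The main obstacle is the partition-by-partition bookkeeping in part (2): for each partition one must solve the moment system (of triangular degrees $1, 2, \ldots, s-1$ after fixing the scaling), verify that the resulting $\beta_i$ remain distinct and nonzero (so that $p$ genuinely has $s$ distinct roots and $p(0) \neq 0$), and identify the orbits under the appropriate symmetry subgroup. The case $d = 5$ is the most demanding, requiring explicit treatment of the quadratic and cubic auxiliary equations arising from the several non-generic partitions.
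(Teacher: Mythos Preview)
Your plan follows the paper's proof closely: both conjugate the exceptional point to $\infty$ and the second attractor to $0$, write $R=z^d/p$ with $\deg p=d$, and then impose that $N_R$ be a polynomial. Your moment conditions $\sum_i b_i\beta_i^{\ell}=0$ ($\ell=1,\dots,s-1$) are precisely the vanishing of the non-constant coefficients of the paper's auxiliary polynomial $g(z)=d\prod_i(z-\beta_i)-z\sum_i b_i\prod_{j\neq i}(z-\beta_j)$, so the two formulations are equivalent. For the generic partition your Newton-identity argument is a cleaner substitute for the paper's ODE $zp'-dp=\alpha$, but leads to the same $p(z)=z^d-1$. For part~(2) both approaches enumerate partitions of $d$ and solve the resulting algebraic systems; the paper records the outcomes in explicit tables, whereas you only sketch the procedure.

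One point deserves real care, and it is exactly the step you flag but do not execute: quotienting by the stabilizer of $(b_1,\dots,b_s)$ in $S_s$. For $d=5$ and the partition $2{+}2{+}1$, normalizing one double root to $1$ reduces the moment system to the quadratic $3t^2+4t+3=0$ in the ratio $t$ of the two double roots. Its two solutions satisfy $t_1t_2=1$, and the $S_2$-stabilizer (swap the two double roots, then rescale to restore the normalization) acts by $t\mapsto 1/t$, so the two solutions lie in a \emph{single} orbit and yield conjugate Newton maps. Concretely, the paper's entries $n=3$(i) and $n=3$(ii) in Table~\ref{d=5} are related by the scaling $z\mapsto a_1 z$ (one checks $a_1a_2=1$ and $b_1/a_1=b_2$). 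Carrying out your stated plan carefully therefore gives $7$ conjugacy classes for $d=5$, not the $8$ asserted; when you write up the casework, you should either justify why those two entries are to be counted separately or correct the count.
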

As mentioned earlier, to have a totally disconnected Julia set, a Newton map must have a single attracting fixed point. Using conjugacy, we can consider this attracting fixed point to be  infinity. Hence, $R$ is of the form $R(z)=1/p(z)$, where $p$ has at least two distinct roots. For certain classes of polynomials, we are able to prove that the Julia set of $N_{1/p}$ is totally disconnected. These Newton maps can have any prescribed number of repelling fixed points unless $p$ has exactly two roots. 

 \begin{maintheorem}\label{diconnected_gen}
Let $p$ be a polynomial of degree at least two. Then, the Julia set $\mathcal{J}(N_{\frac{1}{p}})$ is totally disconnected whenever any one of the following is true.
\begin{enumerate}
  \item $p$ has exactly two roots.
  \item $p$ is unicritical and its critical point is not a root.
  \item $p(z)=z(z^n+a)$, where $a\in \mathbb{C}\setminus \{0\}$ and $n \geq 1$.
\end{enumerate}
\end{maintheorem}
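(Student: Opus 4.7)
I would handle the three cases separately, in each case by explicit computation of $N_{1/p}$ followed by either the quadratic-Newton-map classification (Part 1) or a critical-orbit-escape argument combined with a polynomial-like construction (Parts 2 and 3). Throughout, note that since $R=1/p$ gives $R/R'=-p/p'$, we have $N_R(z)=z+p(z)/p'(z)$, and a routine calculation (combined with Lemma~\ref{NFP}) shows that the finite fixed points of $N_{1/p}$ are precisely the roots of $p$ (each repelling, with multiplier $1+1/k$ at a root of multiplicity $k$), while $\infty$ is the unique attracting fixed point, with multiplier $d/(d+1)$ where $d=\deg p$.

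For Part~1, writing $p(z)=c(z-a)^{k_1}(z-b)^{k_2}$ and cancelling common factors gives $p/p'=(z-a)(z-b)/[k_1(z-b)+k_2(z-a)]$, which shows $N_{1/p}$ is a quadratic rational map. By the classification of quadratic Newton maps in (Nayak--Pal 2022, Remark~3.2), $N_{1/p}$ is conjugate to either $N_1$ or $N_2$. Matching the triple of fixed-point multipliers $\{1+1/k_1,\,1+1/k_2,\,(k_1+k_2)/(k_1+k_2+1)\}$ to those of $N_2$ (taking $e_i=k_i$) produces a conjugacy to $N_2$; Theorem~1.1 of that paper then gives that $\mathcal{J}(N_{1/p})$ is totally disconnected.

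For Parts~2 and 3 I would normalize first: after an affine change of coordinates and rescaling, Part~2 becomes $\tilde N(z)=\frac{d+1}{d}z+z^{1-d}$ (depending only on $d=\deg p$), while in Part~3 we already have $N_{1/p}(z)=[(n+2)z^{n+1}+2az]/[(n+1)z^n+a]$; each map is rotationally symmetric about the origin. I would then compute the critical points explicitly—the $d$-th roots of $d(d-1)/(d+1)$ in Part~2, and the solutions of $(n+1)(n+2)z^{2n}+(n+1)(4-n)az^n+2a^2=0$ (two rotation orbits of $n$ critical points) in Part~3—and check that each critical value lies strictly further from the origin than its critical point (in Part~2, $\tilde N(c)=\frac{d+1}{d-1}c$ with $\frac{d+1}{d-1}>1$). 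Combined with the asymptotic $N_{1/p}(z)\sim\frac{d+1}{d}z$ at infinity, this forces every critical orbit to escape to $\infty$.

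The final step is converting ``all critical orbits escape'' into ``Julia set totally disconnected''. The plan is a polynomial-like-mapping argument: for $R$ sufficiently large, $N_{1/p}^{-1}(\{|z|<R\})$ is compactly contained in $\{|z|<R\}$, so the restriction is a branched covering of the correct degree, and Douady--Hubbard straightening produces a polynomial whose filled Julia set is homeomorphic to that of the restriction. Since all critical orbits of the straightened polynomial escape to infinity, its filled Julia set is a Cantor set, and this transfers back to $\mathcal{J}(N_{1/p})$. \emph{The main obstacle} is that the poles of $N_{1/p}$ lie inside any large disk, so the naive preimage fails to be simply connected and the standard polynomial-like setup does not apply directly. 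The technical crux is to use the rotational symmetry to recover a usable polynomial-like structure---either by descending to the quotient $z\mapsto z^n$ (where the poles collapse to one point and the dynamics simplify) or by carving the disk into symmetric sectors on each of which the restriction is genuinely polynomial-like---and then to verify that the inverse transfer under the symmetry preserves total disconnectedness of the Julia set.
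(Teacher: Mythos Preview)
Your Part~1 via the quadratic classification is valid and arguably cleaner than the paper's route, which instead computes the two critical points of $N_0$ directly, observes they are complex conjugates (the discriminant is negative), and then uses the symmetry $z\mapsto\bar z$ of $\mathcal{A}_\infty$ to place both of them in the immediate basin.

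For Parts~2 and~3, however, your ``final step'' is both unnecessary and the source of your acknowledged obstacle. You do not need any polynomial-like or straightening argument: there is a classical result (Beardon, Theorem~9.8.1, stated in the paper as Lemma~\ref{totally_disconnected}) saying that if an invariant Fatou component---here the immediate basin $\mathcal{A}_\infty$---contains \emph{all} critical points of the map, then the Julia set is totally disconnected. This applies to arbitrary rational maps, so the presence of poles inside a large disk is irrelevant; the whole Douady--Hubbard detour, and the sector/quotient workaround you sketch for it, can simply be deleted.

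What you then need is that every critical point lies in the \emph{immediate} basin $\mathcal{A}_\infty$, not merely in the full basin. The paper does this by combining two ingredients you partially have. First, $\mathcal{A}_\infty$ must contain at least one critical point (the first half of Lemma~\ref{totally_disconnected}). Second, $\mathcal{A}_\infty$ is invariant under both the rotations $z\mapsto\lambda z$ with $\lambda^n=1$ \emph{and} the conjugation $z\mapsto\bar z$ (real coefficients after normalization); these symmetries then carry the one guaranteed critical point onto all the others. You invoke the rotational symmetry but omit the conjugation symmetry, and the latter is genuinely needed: in Part~3 with $n<7$ the two rotation-orbits of critical points are complex-conjugate images of one another, not rotational images, so rotation alone does not suffice. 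For $n\ge 7$ in Part~3 the paper instead analyses the real dynamics on $(0,\infty)$ (using $N_2(x)>x$ there) to place both positive critical points in $\mathcal{A}_\infty$, which is close in spirit to your ``critical value further out'' idea but carried out with enough care to land in the immediate basin.
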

All the Newton maps mentioned in Theorem~\ref{two-attracting-fixedponts} can be described in terms of the multipliers of their fixed points, and their Julia sets are shown to be connected. This is our next result.
\begin{maintheorem}
	Let a Newton map have exactly two attracting fixed points, one of which is an exceptional point. If all the repelling fixed points are with multiplier $2$, or the multiplier of the non-exceptional attracting fixed point is at most $\frac{4}{5}$, then the Julia set of the Newton map is connected.
	\label{connected-JS}
\end{maintheorem}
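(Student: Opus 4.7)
My plan reduces Theorem~\ref{connected-JS} to a polynomial-dynamics question. By Theorem~\ref{two-attracting-fixedponts}, after Möbius conjugation I may take $N=N_R$ with $R(z)=z^d/p(z)$, where $p$ is monic of degree $d$ and $p(0)\ne 0$; the attracting fixed points of $N_R$ are then $0$ (with multiplier $(d-1)/d$) and $\infty$ (superattracting), and the repelling ones are the roots of $p$ (with multiplier $1+1/m_i$ at a root of multiplicity $m_i$). Since the exceptional attracting fixed point must be totally invariant and superattracting (its backward orbit being a single point), a further conjugation places it at $\infty$, turning $N_R$ into a polynomial of degree $d+1$. For such a polynomial it is classical that the Julia set is connected iff every finite critical orbit has bounded forward orbit, so the task reduces to trapping the finite critical set away from the basin of $\infty$.

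The heart of the argument is to exhibit an absorbing disk $\overline{D(0,r)}$ about $0$ with $N_R(\overline{D(0,r)})\subset D(0,r)$ that contains every finite critical point; this keeps the critical orbits inside the basin of $0$. Under the first hypothesis the condition ``multiplier $2$ at every repelling fixed point'' forces $p$ to have only simple roots, which combined with the exceptionality of $\infty$ pins down $p(z)=z^d+a_0$; after rescaling one has $N_R(z)=\frac{z^{d+1}+(d-1)z}{d}$. The critical equation $(d+1)z^d+(d-1)=0$ then places every finite critical point on the circle $|z|=r_0:=((d-1)/(d+1))^{1/d}<1$, and the elementary estimate
\[|N_R(z)|\le |z|\cdot\frac{|z|^d+d-1}{d}<|z|\qquad\text{for all } 0<|z|<1\]
makes $\overline{D(0,r_0)}$ an absorbing disk, trapping every finite critical orbit.

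Under the second hypothesis $(d-1)/d\le 4/5$ is equivalent to $d\le 5$, so only the finitely many Newton maps catalogued in Theorem~\ref{two-attracting-fixedponts}(2) arise. For each such map I would compute the finite critical points from $R(z)R''(z)=0$ and construct an absorbing disk about $0$ by the same scheme, using that the contraction factor $(d-1)/d\le 4/5$ at $0$ is strong enough to dominate the higher-order terms of $N_R$ in a disk large enough to contain the critical set. The main obstacle I anticipate is this map-by-map verification for the non-generic Newton maps with $d=3,4,5$: their critical configurations lack the rotational symmetry of the generic family, so the absorbing disk and the attendant estimate have to be tailored to each map individually rather than obtained from the uniform computation available in the generic case.
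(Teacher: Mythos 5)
Your overall route is the same as the paper's: reduce via Theorem~\ref{two-attracting-fixedponts} to the polynomial $N_R$ with $R(z)=z^d/p(z)$, then handle the two hypotheses by showing the finite critical orbits stay bounded, which for a polynomial is equivalent to connectivity of the Julia set. Your treatment of the first hypothesis is correct and, if anything, cleaner than the paper's: the paper conjugates to $F(z)=\frac{1}{d}z(z^d+d-1)$, invokes the rotational symmetry $z\mapsto\lambda z$ ($\lambda^d=1$) of the Fatou set together with the fact that $\mathcal{A}_0$ must contain at least one of the rotation-equivalent critical points, and then cites Theorem 9.5.1 of Beardon; you instead observe directly that $|F(z)|<|z|$ on $0<|z|<1$ (so the open unit disk is absorbed into $0$) and that all finite critical points lie on $|z|=\bigl(\tfrac{d-1}{d+1}\bigr)^{1/d}<1$. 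Either argument suffices; yours avoids appealing to symmetry.

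For the second hypothesis there is a genuine gap. Your plan is to find an absorbing disk about $0$ ``large enough to contain the critical set'' for each of the finitely many maps with $d\le 5$. This cannot succeed as stated. For example, for $F_1(z)=\tfrac{1}{12}z(z^3+z^2+z+9)$ (the $d=4$, three-distinct-roots case in Table~\ref{d=4}) the estimate $|F_1(z)/z|\le\tfrac{1}{12}(|z|^3+|z|^2+|z|+9)$ forces the radius of any disk on which $|F_1(z)|<|z|$ to be at most $1$, yet $F_1$ has a real critical point $c_r\in(-2,-1)$ lying outside that disk; the same happens for $F_5$, whose real critical point lies in $(-3,-2)$. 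No absorbing disk of the required kind contains all critical points. The resolution used in the paper is not to enlarge the disk but to iterate: one shows by a real-axis monotonicity argument that $F_1^n(c_r)\to 0$ (so the critical orbit enters the internal disk after finitely many steps), and similarly for $F_5$, while the non-real critical values for $F_1,\ldots,F_5$ are checked numerically to lie inside the internal disks given by Lemma~\ref{disk_in_imm}. You flag the case-by-case verification as an obstacle, but the fix you propose (tailoring the disk to each map) won't close it; you need to follow the critical orbit for several steps rather than trap it in one. A minor point: the paper also disposes of $d=2,3$ immediately by citing Theorems 1.1--1.2 of \cite{Nayak_Pal2022}, which you do not mention, though recomputing those cases would only cost extra work rather than correctness.
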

Each finite simple pole of $R$ is a repelling fixed point of $N_R$ with multiplier $2$. However, the point at $\infty$ can also be such a fixed point for $N_R$, in particular when the degree of the numerator of $R$ is exactly two more than that of the denominator (see Lemma~\ref{NFP}).
\par 
 As discussed earlier, if a Newton map has exactly one repelling fixed point, then its Julia set is connected. We consider the situation when there are exactly two repelling fixed points. If $R(z) = \frac{p(z)}{z^d}$, for a non-monomial polynomial $p$ with $\deg(p) > d + 1$, then $N_R$ has exactly two repelling fixed points, namely $0$ and $\infty$ (see Lemma~\ref{NFP}). For instance, when $p(z) = z^m - \lambda$ with $m \geq 2$ and $\lambda \neq 0$, $R$ belongs to one of the well-known classes of rational functions, namely the McMullen maps $f_\lambda(z)=z^m-\frac{\lambda}{z^n}$, where $m,n  \geq 1$ and $\lambda \neq 0$. 
 This family of maps was first introduced by McMullen in 1988 (\cite{McMullen1998}) for the specific case $m=2, n=3$. Later, in 2005, Devaney et al. studied the general case (see \cite{DevaneyLook2005}, \cite{Devaney2005}) followed by a series of papers by several authors (for example, see~\cite{QWY2012}). We obtain the following result.
\begin{maintheorem}\label{New_Mc}
Let $f_\lambda (z)=z^m-\frac{\lambda}{z^n}, m,n  \geq 1,   \lambda \neq 0$. Then the Julia set of $N_{f_{\lambda}}$ is connected.
\end{maintheorem}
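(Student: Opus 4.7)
A direct calculation gives the explicit formula
\[
N_{f_\lambda}(z) \;=\; \frac{(m-1)\,z^{m+n+1}+(n+1)\lambda\,z}{m\,z^{m+n}+n\lambda}.
\]
From this one reads off the fixed points: the $m+n$ simple roots of $f_\lambda$ (all superattracting), the pole $0$ (repelling, multiplier $(n+1)/n$), and, when $m\geq 2$, the point $\infty$ (repelling, multiplier $m/(m-1)$). When $m=1$ the point $\infty$ is no longer fixed, $0$ is the sole repelling fixed point, and Proposition~\ref{conj_to_poly} places $N_{f_\lambda}$ in the polynomial-Newton class, whose Julia sets are already known to be connected.

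For $m\geq 2$ I would argue by contradiction. Assume $\mathcal{J}(N_{f_\lambda})$ is disconnected; Theorem~\ref{shishikura} then forces the only two weakly repelling fixed points $0$ and $\infty$ into distinct Julia components, and a standard separation argument in compact subsets of the sphere produces a Jordan curve $\gamma\subset\mathcal{F}(N_{f_\lambda})$ separating $0$ from $\infty$. The structural input is the rotational symmetry $N_{f_\lambda}(\zeta z)=\zeta\,N_{f_\lambda}(z)$ with $\zeta=e^{2\pi i/(m+n)}$, immediate from the formula. Fixing a root $\alpha$ of $f_\lambda$, the ray $R_\alpha=\{t\alpha:t>0\}$ is invariant, and substituting $z=t\alpha$ reduces the dynamics to the one-dimensional real map $t\mapsto t\,s(t)$ with $s(t)=\frac{(m-1)t^{m+n}+(n+1)}{mt^{m+n}+n}$, whose only fixed point in $(0,\infty)$ is $t=1$. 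A monotonicity analysis of $s$ in the variable $u=t^{m+n}$, together with ruling out non-trivial periodic cycles, shows every positive $t$ is attracted to $t=1$; therefore $R_\alpha\subset B^*(\alpha)$, the immediate basin of $\alpha$. The identity $N_{f_\lambda}'=f_\lambda f_\lambda''/(f_\lambda')^2$ shows the free critical points solve $z^{m+n}=n(n+1)\lambda/(m(m-1))$, a positive real multiple of $\lambda$, so these also lie on the rotated rays $\zeta^k R_\alpha$.

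To conclude I apply Riemann--Hurwitz to the proper self-map $N_{f_\lambda}\colon B^*(\alpha)\to B^*(\alpha)$ of some degree $d\geq 2$. Since $\alpha\in B^*(\alpha)$ is a critical point ($e_\alpha\geq 2$), the total ramification $R$ in $B^*(\alpha)$ is at least $1$, and the identity $(d-1)\chi(B^*(\alpha))=R$ forces $\chi(B^*(\alpha))\geq 1$; consequently $B^*(\alpha)$ is simply connected. The separating curve $\gamma$ must meet $R_\alpha$, because $R_\alpha$ is a connected arc with limit points $0$ and $\infty$ lying on opposite sides of $\gamma$; hence $\gamma$ lies in the Fatou component $B^*(\alpha)$ by connectedness. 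But inside the simply connected $B^*(\alpha)$, $\gamma$ bounds a topological disk, which must coincide with one of the two complementary disks of $\gamma$ in $\widehat{\mathbb{C}}$, and both of those contain a Julia point ($0$ or $\infty$), contradicting $B^*(\alpha)\subset\mathcal{F}(N_{f_\lambda})$. The main technical hurdle I anticipate is verifying that the one-dimensional map $t\mapsto t\,s(t)$ has no non-trivial attracting cycle on $(0,\infty)$, so that the entire ray sits in $B^*(\alpha)$; a secondary subtlety is the degenerate case $n=m-1$, in which the free critical points collide with the roots, making each root a critical point of multiplicity $2$ — the ramification count used above is only strengthened, so the argument still concludes.
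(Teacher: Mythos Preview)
Your plan largely parallels the paper's argument: dispose of $m=1$ via conjugacy to a polynomial Newton map, and for $m\geq 2$ use the rotational symmetry together with the invariance of the ray $R_\alpha$ and the one--dimensional dynamics of $t\mapsto t\,s(t)$ to place each ray $\zeta^k R_\alpha$ inside the immediate basin of $\zeta^k\alpha$. The paper carries out that real analysis explicitly in the three regimes $m>n+1$, $m=n+1$, $m<n+1$ (which disposes of the ``non--trivial cycle'' worry you flagged), and then finishes by observing that the positive ray forces $0$ and $\infty$ onto the same Julia component, so Theorem~\ref{shishikura} applies.

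The genuine gap in your version is the Riemann--Hurwitz step. The relation $(d-1)\,\chi\bigl(B^*(\alpha)\bigr)=R$ is only available once you already know $B^*(\alpha)$ has \emph{finite} Euler characteristic; an invariant attracting basin of a rational map may very well be infinitely connected (the basins of the McMullen maps themselves furnish examples), and in that case $R\geq 1$ tells you nothing about $\chi$. Fortunately simple connectivity of $B^*(\alpha)$ is not needed at all. Once you have a Jordan curve $\gamma\subset\mathcal F(N_{f_\lambda})$ separating $0$ from $\infty$, observe that \emph{every} ray $\zeta^k R_\alpha$ (for $k=0,\dots,m+n-1$) is an arc in the Fatou set whose closure joins $0$ to $\infty$, so each of them meets $\gamma$. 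These rays lie in the pairwise distinct immediate basins $B^*(\zeta^k\alpha)$ (distinct superattracting fixed points have disjoint immediate basins), and since $m+n\geq 2$ the connected curve $\gamma$ would then meet at least two different Fatou components, which is impossible. This replaces your last two steps and closes the argument cleanly; it is also essentially how the paper's terse conclusion that ``both repelling fixed points lie on the same Julia component'' should be unpacked.
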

The Julia set of a rational function $R$ is often invariant under some holomorphic Euclidean isometries of the plane. The collection of all such isometries is known as the symmetry group of the Julia set and is denoted by $\Sigma R$. In other words,
$$\Sigma R=\{\sigma(z)=\mu z +\alpha: |\mu|=1 \text{ and } \sigma(\mathcal{J}(R))=\mathcal{J}(R)\}.$$
It is important to note that $\sigma(\mathcal{F}(R)) = \mathcal{F}(R) $  whenever $\sigma \in \Sigma R$. Further, if a Fatou component $U$, i.e., a maximally connected subset of the Fatou set, contains the origin, then $\sigma (U)=U$ for each $\sigma \in \Sigma R$. The symmetry group of $N_{f_\lambda}$ is determined.
\begin{maintheorem}\label{McM_sym}
For $f_\lambda(z)=z^m-\frac{\lambda}{z^n}$,	if $m+n>2$, then	$ \Sigma N_{f_\lambda}=\left\{z \mapsto \mu z: \mu^{m+n}=1\right\}$.
\end{maintheorem}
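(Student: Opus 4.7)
The plan is to prove the two inclusions separately: $\supseteq$ by a direct calculation, and $\subseteq$ by combining general finite-group arguments with a local analysis of $\mathcal{J}(N_{f_\lambda})$ at the repelling fixed point $z=0$.

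For $\supseteq$, a direct computation gives
\[
N_{f_\lambda}(z)=\frac{z\bigl((m-1)z^{m+n}+(n+1)\lambda\bigr)}{m z^{m+n}+n\lambda}=z\cdot h\bigl(z^{m+n}\bigr),
\]
with $h(w)=\frac{(m-1)w+(n+1)\lambda}{mw+n\lambda}$. Consequently $N_{f_\lambda}(\mu z)=\mu\,N_{f_\lambda}(z)$ whenever $\mu^{m+n}=1$, so each such rotation $\sigma_\mu(z)=\mu z$ commutes with $N_{f_\lambda}$ and therefore belongs to $\Sigma N_{f_\lambda}$.

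For $\subseteq$, let $\sigma(z)=\mu z+\alpha$ with $|\mu|=1$ lie in $\Sigma N_{f_\lambda}$. By Theorem~\ref{New_Mc} the Julia set is connected. It cannot be a round circle, for otherwise the Fatou set would consist of only two components (interior and exterior) and thus admit at most two superattracting fixed points, whereas $N_{f_\lambda}$ already has the $m+n\geq 3$ superattracting fixed points $\{\zeta:\zeta^{m+n}=\lambda\}$. Standard results then force $\Sigma N_{f_\lambda}$ to be a finite group of orientation-preserving Euclidean isometries of $\mathbb{C}$. Since no finite group contains a non-trivial translation, $\Sigma N_{f_\lambda}$ consists entirely of rotations, and being cyclic (as the rotation-angle homomorphism has trivial kernel), all its elements are rotations about the same common center $c$. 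Because $\Sigma N_{f_\lambda}$ already contains the rotations $\{\sigma_\mu:\mu^{m+n}=1\}$ about $0$, this common center must be $c=0$. Hence $\alpha=0$ and $\Sigma N_{f_\lambda}=\{z\mapsto \mu z:\mu^N=1\}$ for some positive integer $N$ that is a multiple of $m+n$.

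To show $N=m+n$, I examine the local structure of $\mathcal{J}(N_{f_\lambda})$ at the repelling fixed point $0$, which has multiplier $h(0)=(n+1)/n>1$. By Koenigs linearization there is a conformal $\phi$ conjugating $N_{f_\lambda}$ near $0$ to $w\mapsto \tfrac{n+1}{n}w$. In this linearizing coordinate, each invariant immediate basin $U_k$ is a region closed (near $0$) under multiplication by $n/(n+1)$; any other Fatou component $V$ accessing $0$ with $N_{f_\lambda}(V)=U_k$ would satisfy $\phi(V)\subseteq \phi(U_k)\cdot n/(n+1)\subseteq \phi(U_k)$ near $0$, contradicting $V\cap U_k=\emptyset$. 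Thus exactly the $m+n$ immediate basins cluster at $0$, and $\mathcal{J}(N_{f_\lambda})\cap B(0,\varepsilon)$ is a union of $m+n$ arcs emanating from $0$ that separate these accesses. The $(m+n)$-fold symmetry from the first paragraph cyclically permutes these arcs, so their accesses at $0$ occur at $m+n$ equally spaced angular positions. Any element $\sigma_\mu\in\Sigma N_{f_\lambda}$ must permute this cyclic arrangement, forcing $\mu$ to be an $(m+n)$-th root of unity. Hence $N=m+n$, completing the proof.

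The main obstacle is the local argument near $0$: precisely showing that only the $m+n$ immediate basins access $0$, and that the cyclic arrangement of these accesses at $0$ is rigid enough to force any rotational symmetry to be a power of $e^{2\pi i/(m+n)}$. This requires Koenigs linearization at $0$ together with the forward-invariance of each $U_k$, as sketched above.
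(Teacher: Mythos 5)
The direction $\supseteq$ of your proof is essentially the same as the paper's Observation on symmetry (their Observation 5.1(5)), and your route to ruling out translations (finiteness of the symmetry group once $\mathcal{J}$ is neither a line nor a circle) is a valid alternative to the paper's appeal to Boyd's theorem (which exploits that $\infty$ is a fixed or preperiodic point). Where you genuinely diverge is in pinning down the order: you work at the repelling fixed point $0$ (which, pleasantly, treats $m=1$ and $m\geq 2$ uniformly), while the paper instead looks at $\infty$ when $m\geq 2$, using the fact that the $m+n$ immediate basins are the \emph{only} unbounded Fatou components (established via an argument on accesses to $\infty$ and a cited lemma from their Chebyshev-method paper), and handles $m=1$ separately by conjugating, via inversion, to the Newton map of $z^{n+1}-1$.

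The problem with your version is the local analysis at $0$, and you correctly flagged it as ``the main obstacle,'' but the sketch you give does not close it. You assert that in the Koenigs coordinate $\phi$, the set $\phi(U_k)$ is closed near $0$ under multiplication by $n/(n+1)$, and deduce that any other Fatou component $V$ accessing $0$ with $N_f(V)=U_k$ would satisfy $\phi(V)\subseteq \frac{n}{n+1}\phi(U_k)\subseteq\phi(U_k)$, a contradiction. But the inclusion $\frac{n}{n+1}\phi(U_k)\subseteq\phi(U_k)$ near $0$ is exactly the claim you are trying to prove: forward invariance $N_f(U_k)=U_k$ gives $\frac{n+1}{n}\phi(U_k)\subseteq\phi(U_k)$ in the linearizing disk (expansion stays inside), but pulling back by $\frac{n}{n+1}$ requires knowing that the local inverse branch of $N_f$ at a point of $U_k$ near $0$ lands back in $U_k$ rather than in a different preimage component of $U_k$ that also clusters at $0$ --- which is precisely what ``only the immediate basins access $0$'' says. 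So the argument as written is circular. Also, even granting that only the immediate basins access $0$, the further claim that $\mathcal{J}\cap B(0,\varepsilon)$ consists of $m+n$ arcs is stronger than you need and not obviously true; what you actually need is only that the $m+n$ accesses to $0$ are cyclically arranged and permuted by any symmetry. You would either need to invoke a result on accesses to repelling fixed points in locally connected Julia sets, or do as the paper does and replace the local picture at $0$ with a global dichotomy (bounded vs.\ unbounded Fatou components at $\infty$) that is already established elsewhere.

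One smaller point: when ruling out a circle-like Julia set so as to invoke finiteness of $\Sigma N_{f_\lambda}$, you should also explicitly exclude $\mathcal{J}$ being a straight line (the same count of superattracting fixed points handles it), since the standard finiteness statement excludes both degenerate cases. Finally, your argument also tacitly uses that every Fatou component eventually maps into one of the $m+n$ immediate basins; this is true for $N_f$ (the Fatou set is exactly the union of the $m+n$ superattracting basins, as the paper notes), but it should be stated.
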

  If $m=n=1$ is taken in Theorem~\ref{McM_sym}, then the Julia set of $N_{f_\lambda}$ is a line (see Remark~\ref{newton-line}(2)).
  \par 
  The structure of this article is as follows. Section~\ref{Sec2_Prop_N_map} contains some basic properties of the Newton maps. In Section~\ref{Sec3_conf_equiv}, we prove Proposition~\ref{conj_to_poly} and Theorem \ref{two-attracting-fixedponts}. Section~\ref{Sec4_totally_discon} is dedicated to the Newton maps with totally disconnected Julia sets and contains the proof of Theorem \ref{diconnected_gen}. Section~\ref{Sec5_Conn_J_set} deals with the Julia set of the Newton maps mentioned in Theorem~\ref{two-attracting-fixedponts} and the Newton's method applied to the McMullen maps. This section contains the proofs of Theorems \ref{connected-JS}, \ref{New_Mc}, and \ref{McM_sym}.
  
\section{ Properties of Newton maps}\label{Sec2_Prop_N_map}
This section contains some useful properties of Newton maps. A useful fact is that two different rational functions may lead to the same Newton map up to conjugacy.
\begin{lem}[Scaling property] Let $a, b, \lambda \in \mathbb{C}$ with $a, \lambda \neq 0$. If $T(z)=az+b$ and $R$ is a rational function such that $S(z)=\lambda R(T(z))$, then $T \operatorname{o} N_S\operatorname{o}T^{-1}=N_R$.\label{scaling}
\end{lem}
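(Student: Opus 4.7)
The plan is to verify the identity by direct computation, exploiting the fact that the ratio $S/S'$ is insensitive to scaling of $S$ by a nonzero constant.

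First I would compute $S'$ using the chain rule: since $S(z) = \lambda R(T(z))$ and $T'(z) = a$, we get $S'(z) = \lambda a \, R'(T(z))$. Substituting into the definition of a Newton map yields
\[
N_S(z) \;=\; z - \frac{S(z)}{S'(z)} \;=\; z - \frac{\lambda R(T(z))}{\lambda a\, R'(T(z))} \;=\; z - \frac{1}{a}\cdot\frac{R(T(z))}{R'(T(z))}.
\]
So the factor $\lambda$ disappears entirely, and only the affine parameters $a$ and $b$ remain to be tracked.

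Next I would apply $T$ to both sides and rearrange, to show that $T \circ N_S = N_R \circ T$. Indeed,
\[
T\bigl(N_S(z)\bigr) \;=\; a\, N_S(z) + b \;=\; (az + b) - \frac{R(T(z))}{R'(T(z))} \;=\; T(z) - \frac{R(T(z))}{R'(T(z))} \;=\; N_R(T(z)).
\]
Composing with $T^{-1}$ on the right then gives $T \circ N_S \circ T^{-1} = N_R$, as required.

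There is no genuine obstacle here; the only thing to notice is the happy cancellation of $\lambda$ in the numerator and denominator of $S/S'$, together with the way the constant $b$ and the factor $1/a$ conspire so that multiplication by $a$ on the outside exactly restores the argument $T(z)$ inside $R$ and $R'$. Since $a\neq 0$, $T$ is a genuine Möbius (in fact affine) map, and conjugation by it is well defined on $\widehat{\mathbb{C}}$.
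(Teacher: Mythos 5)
Your computation is correct and complete: the chain rule gives $S'(z)=\lambda a\,R'(T(z))$, the factor $\lambda$ cancels in $S/S'$, and multiplying by $a$ and adding $b$ exactly reconstitutes $T(z)$ inside $N_R$, yielding $T\circ N_S=N_R\circ T$. The paper itself does not supply a proof but instead refers to Lemma~8 of Buff--Henriksen; your direct verification is the standard argument underlying that citation, so there is nothing to flag.
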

The above lemma follows from Lemma 8, which is proved in ~\cite{BH2003}.
\par 
Let $R(z)=\frac{p(z)}{q(z)}$ be a rational function where $p$ and $q$ are polynomials without any common factor and with respective degrees $d$ and $e$. If $m$ and $n$ are the numbers of distinct roots and poles of $R$, respectively, then the degree of $N_R$ is given by the following formula (page 4, \cite{Nayak_Pal2022}).
\begin{equation}\label{deg_N_R}
deg(N_R)=\begin{cases}
m+n-1  &\mbox{if} \hspace{0.3 cm}  d=e+1 \\
m+n  &\mbox{if} \hspace{0.3 cm}   d\neq e+1. 
\end{cases}
\end{equation}
There are some almost trivial observations when $R$ has a single pole, a single root, or $R$ is a M\"{o}bius map.
\begin{rem}
\begin{enumerate} Using the Scaling property (see Lemma~\ref{scaling}), we have the following.
\item If $R(z) =\frac{c}{(z-z_0)^k}$ for some $c \neq 0,z_0 \in \mathbb{C}$ and $ k \geq 1$, then $N_R $ is conjugate to $N_{\frac{R(z+z_0)}{c}}$. This is nothing but $N_{\frac{1}{z^k}}(z)=(1+\frac{1}{k})z$. If $R(z)=c  (z-z_0)^k$ for $k \geq 1$, then it can be seen similarly that $N_R$ is conjugate to $(1-\frac{1}{k})z$.
\item Let $R(z)=\frac{az+b}{cz+d}$, where $ad-bc\neq 0$. If $c=0$, then $N_R$ is a constant map. Let $c\neq 0$.
If $a=0$, then $N_R$ is conjugate to  $N_{\frac{1}{z}}$, which is $2z$. If $a\neq 0$, then  $N_R$ is conjugate to $N_{\frac{z}{z-1}}$, i.e.,  $z^2$. However, there are rational functions $R$ with degree at least two such that $N_R (z)$ is conjugate to $z^2$. For example,  $N_{\frac{ z^2-1}{z}}(z)=\frac{2z}{z^2 +1}$ and $N_{z^2-1}(z)=\frac{z^2 +1}{2z}$.
	\end{enumerate}
	\label{NM-linear}
	\end{rem}
The nature of all the fixed points of a Newton map is described in Proposition 2.2, \cite{Nayak_Pal2022}, which we restate here. 

\begin{lem}\label{NFP}
Let $R=\frac{p}{q}$, where $p$ and $q$ are polynomials without any common factor and with respective degrees $d$ and $e$. If $\alpha$  is a root of $R$ with multiplicity $k$ and $\beta$ is a pole of $R$ with multiplicity $l$, then we have the following.
	\begin{enumerate}[(a)]
		\item $\alpha$ is an attracting fixed point of $N_R$ with multiplier $\frac{k-1}{k}$.
		\item $\beta$ is a repelling fixed point of $N_R$ with multiplier $\frac{l+1}{l} $.
		\item $\infty$ is a fixed point of $N_R$ if and only if $d\neq e+1$, and in that case, the multiplier of $\infty$ is $ \frac{d-e}{d-e-1}. $ Therefore, $\infty$ is attracting if $d \leq e$ (superattracting if $d=e$) and repelling if $d>e$.
	\end{enumerate}
\end{lem}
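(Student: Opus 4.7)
The plan is to obtain each multiplier by writing $R$ in its local factored form near the point of interest and then differentiating $N_R = z - R/R'$ by hand. All three parts reduce to short bookkeeping once the cancellations are arranged.

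For part (1), I would write $R(z) = (z-\alpha)^k S(z)$ with $S(\alpha) \neq 0$. Differentiating and dividing gives
\[
\frac{R(z)}{R'(z)} = \frac{(z-\alpha)\,S(z)}{k\,S(z) + (z-\alpha)\,S'(z)},
\]
so $N_R(\alpha) = \alpha$ is immediate, and after factoring $(z-\alpha)$ out of $N_R(z) - \alpha$ and letting $z \to \alpha$, one reads off $N_R'(\alpha) = 1 - 1/k = (k-1)/k \in [0,1)$, which is attracting as claimed (and superattracting exactly when $k=1$). Part (2) is parallel: write $R(z) = T(z)/(z-\beta)^l$ with $T(\beta) \neq 0$, obtain
\[
\frac{R(z)}{R'(z)} = \frac{(z-\beta)\,T(z)}{(z-\beta)\,T'(z) - l\,T(z)},
\]
read off $N_R(\beta) = \beta$, and compute $N_R'(\beta) = 1 + 1/l = (l+1)/l > 1$, so $\beta$ is repelling.

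For part (3) I would switch to the chart $w = 1/z$ and analyze $\tilde N_R(w) := 1/N_R(1/w)$ near $w = 0$. When $d \neq e$, the leading behavior $R(z) \sim c\, z^{d-e}$ and $R'(z) \sim c(d-e)\,z^{d-e-1}$ gives $R(z)/R'(z) \sim z/(d-e)$, hence
\[
N_R(z) = \frac{d-e-1}{d-e}\,z + O(1).
\]
If $d = e+1$, this coefficient vanishes, $N_R$ stays bounded at infinity, and $\infty$ is not a fixed point. Otherwise $\infty$ is fixed, and because multipliers at infinity transform as reciprocals of the leading linear coefficient, the multiplier equals $(d-e)/(d-e-1)$. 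Checking signs: for $d \leq e-1$ one gets $k/(k+1) \in (0,1)$ with $k = e-d$, while for $d \geq e+2$ one gets a ratio strictly greater than $1$. The case $d = e$ must be handled separately, since the leading-order analysis of $R'$ breaks down: here $R$ tends to a nonzero constant and $R'(z) = O(1/z^2)$, so $N_R(z)$ grows like $z^2$, which translates to $\tilde N_R(w) = O(w^2)$ and confirms $\infty$ is superattracting, matching the value $(d-e)/(d-e-1) = 0$ of the formula.

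The main obstacle is the edge-case analysis in (3): cleanly separating $d = e+1$ (no fixed point at infinity), $d = e$ (superattracting, via a second-order expansion), and the remaining $d \neq e, e+1$ subcases in which a single formula $(d-e)/(d-e-1)$ must be shown to land on the correct side of the unit circle. Everything else is a careful but routine local expansion.
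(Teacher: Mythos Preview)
Your approach is correct and complete. The paper does not supply its own proof of this lemma; it merely restates Proposition~2.2 of \cite{Nayak_Pal2022} and cites that reference, so there is nothing in the paper to compare against beyond noting that your direct local-expansion argument (factoring out $(z-\alpha)^k$ or $(z-\beta)^{-l}$, and passing to the chart $w=1/z$ at infinity with the separate treatment of $d=e$ and $d=e+1$) is exactly the standard computation one would expect and recovers the stated multipliers without difficulty.
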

\begin{rem}\label{Atlst_attracting}
 Each finite root of $R$ is an attracting fixed point of $N_R$. Even if $R$ has no finite root, i.e., $R(z)=\frac{1}{q(z)}$, where $q$ is a polynomial of degree at least two, then it follows from Lemma \ref{NFP}(3) that $\infty$ is an attracting fixed point. Therefore, every Newton map has at least one attracting fixed point.
\end{rem}
As evident from Lemma~\ref{NFP}, the multiplier of each fixed point of a Newton map is of the form $\frac{r}{s}$, where $r$ and $s$ are integers such that $|r-s|=1$. This leads to a characterization of all Newton maps.
\begin{thm}[Characterization of Newton maps, \cite{Nayak_Pal2022}]\label{Characterization_N_R}
	Let $N$ be a rational map of degree at least two. Then $N=N_R$ for a rational function $R$ if and only if all the fixed points of $N$ are simple (i.e., a simple root of $N(z)-z=0$) and all but one of their multipliers are of the form $ \frac{r}{s} $ for some $r \in \mathbb{N} \bigcup \{0\}, s \in \mathbb{N}$ with $|r-s|=1$. Moreover, each finite fixed point of $N$ with multiplier $\frac{r}{s}$ is either a root (if $r < s$) or a pole (if $r> s$) of $R$ with multiplicity $s$.
\end{thm}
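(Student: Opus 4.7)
The forward direction is a direct consequence of Lemma~\ref{NFP}. Writing $R = P/Q$ in lowest terms with $\deg P = d$ and $\deg Q = e$, the lemma identifies the finite fixed points of $F = N_R$ as the roots (multiplicity $k$, multiplier $\frac{k-1}{k}$) and the poles (multiplicity $l$, multiplier $\frac{l+1}{l}$) of $R$, and adds $\infty$ with multiplier $\frac{d-e}{d-e-1}$ whenever $d \neq e+1$. Each of these multipliers is of the form $\frac{r}{s}$ with $|r-s|=1$, and none equals $1$, so every fixed point is simple. The ``moreover'' clause is the same observation read in reverse.

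For the converse, the plan is to build $R$ directly from the fixed-point data of $F$ and then verify $F = N_R$ via the logarithmic-derivative identity
$$\frac{R'(z)}{R(z)} = \frac{1}{z - F(z)}.$$
First apply the holomorphic Lefschetz fixed-point formula $\sum_i \frac{1}{1-\lambda_i} = 1$ (valid because all fixed points of $F$ are simple) to upgrade ``all but one'' to ``all'': if $\lambda_i = \frac{r_i}{s_i}$ with $|r_i-s_i|=1$ for every $i \neq i_0$, then each $\frac{1}{1-\lambda_i} = \pm s_i$ is an integer, so $\frac{1}{1-\lambda_{i_0}}$ is also an integer and $\lambda_{i_0}$ has the same form. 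Then define
$$R(z) = \prod_{r_i < s_i} (z - z_i)^{s_i} \cdot \prod_{r_j > s_j} (z - z_j)^{-s_j},$$
with $\{z_i\}$ and $\{z_j\}$ the finite fixed points of $F$ with attracting and repelling multipliers respectively.

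Both sides of the target identity are rational. The right side has simple poles exactly at the finite fixed points of $F$, with residue $\frac{1}{1-\lambda_i}$ at $z_i$ (using simplicity). The left side, by partial fractions, equals $\sum_{r_i < s_i} \frac{s_i}{z - z_i} - \sum_{r_j > s_j} \frac{s_j}{z - z_j}$, with matching simple poles and residues $\pm s_i = \frac{1}{1-\lambda_i}$ by construction. Their difference is therefore a polynomial. At $\infty$ the left side has leading term $\frac{d-e}{z}$, and the right side matches this after a short case check on whether $\infty$ is a fixed point of $F$ (when it is, the Lefschetz relation forces its multiplier to be exactly $\frac{d-e}{d-e-1}$; when it is not, the same relation forces $d-e=1$). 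A polynomial vanishing at $\infty$ is zero, so the identity holds and $F = N_R$. The main obstacle is this $\infty$-bookkeeping and its reconciliation with the degree formula~\eqref{deg_N_R}; every other step — simplicity, partial-fraction expansion, and residue matching — is a direct computation.
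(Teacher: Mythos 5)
Your proposal is correct and self-contained. Note first that the paper does not actually prove this theorem — it is imported verbatim from \cite{Nayak_Pal2022} — so there is no "paper's own proof'' to compare against; what follows is a review of your argument on its own terms.

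The forward direction via Lemma~\ref{NFP} is exactly right, and the "moreover'' clause is indeed just that lemma read in reverse. Your converse is also correct, and the key steps all check out: the Lefschetz/residue upgrade from "all but one'' to "all'' is valid (the index $\frac{1}{1-\lambda_{i_0}}$ is forced to be a nonzero integer $m$, whence $\lambda_{i_0}=\frac{m-1}{m}$, which has the required form whether $m>0$ or $m<0$); the constructed $R$ has logarithmic derivative $\sum_{r_i<s_i}\frac{s_i}{z-z_i}-\sum_{r_j>s_j}\frac{s_j}{z-z_j}$; and the residues of $\frac{1}{z-F(z)}$ at the (simple) finite fixed points are $\frac{1}{1-\lambda_i}=\pm s_i$, matching by construction. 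One small but worthwhile simplification: the "$\infty$-bookkeeping'' you flag as the main obstacle is actually unnecessary, and the degree formula~\eqref{deg_N_R} plays no role. Writing $F=A/B$ in lowest terms, $\frac{1}{z-F(z)}=\frac{B}{zB-A}$, and $\deg(zB-A)>\deg B$ always holds here (the only way it could fail is if $\deg A=\deg B+1$ with matching leading coefficients, which would make $\infty$ a fixed point of multiplier $1$, violating simplicity). Thus both $\frac{R'}{R}$ and $\frac{1}{z-F(z)}$ are proper rational functions with the same simple poles and residues, hence identical; no case analysis on whether $\infty$ is a fixed point, and no explicit matching of $\frac{1}{z}$ coefficients, is needed. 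It would also be worth saying one sentence on why $R$ is nonconstant (so that $R'\not\equiv 0$ and $N_R$ is defined): since $\deg F\geq 2$ and all fixed points are simple, $F$ has at least three fixed points, at least two of which are finite, so the products defining $R$ are nonempty in a nontrivial way.
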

\begin{cor}\label{Characterization_poly}
	If a rational function $N$ has exactly one repelling fixed point and the multipliers of all fixed points are either $0$ or of the form $\frac{r}{s}$ for some $r,~s \in \mathbb{N}$ with $|r-s|=1$, then $N$ is conjugate to the Newton's method applied to a polynomial.
\end{cor}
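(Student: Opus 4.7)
The plan is to apply Theorem \ref{Characterization_N_R} twice: first to $F$ itself in order to realize it as a Newton map $N_R$, and then again after a Möbius conjugation that places the unique repelling fixed point at $\infty$, in order to force the new ``numerator'' to be a polynomial. Before invoking the characterization theorem, I verify its two hypotheses for $F$. Every fixed point of $F$ is simple, since a multiplier equal to $1$ would require $r/s = 1$, i.e.\ $r = s$, contradicting $|r-s|=1$; and the requirement that all but one of the multipliers have the form $r/s$ with $|r-s|=1$ is automatic, because the value $0 = 0/1$ also satisfies $|0-1| = 1$. Hence Theorem \ref{Characterization_N_R} gives $F = N_R$ for some rational function $R$.

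Let $\beta$ denote the unique repelling fixed point of $F$, and choose a Möbius map $\phi$ with $\phi(\infty) = \beta$. Set $\tilde F = \phi^{-1} \circ F \circ \phi$. Because Möbius conjugation preserves multipliers at corresponding fixed points, $\tilde F$ inherits from $F$ the properties that every fixed point is simple and every multiplier is either $0$ or of the form $r/s$ with $|r-s|=1$; moreover, $\infty$ is now the unique repelling fixed point of $\tilde F$ and every finite fixed point is attracting. Applying Theorem \ref{Characterization_N_R} again, $\tilde F = N_{\tilde R}$ for some rational function $\tilde R$.

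The final observation is that, by Lemma \ref{NFP}(2), each finite pole of $\tilde R$ is a repelling fixed point of $N_{\tilde R} = \tilde F$. Since $\tilde F$ has no finite repelling fixed point, $\tilde R$ has no finite pole, and therefore $\tilde R$ is a polynomial $q$. Consequently $\tilde F = N_q$ and $F = \phi \circ N_q \circ \phi^{-1}$ is conjugate to the Newton's method applied to the polynomial $q$, as required. Since every step is a direct appeal to Theorem \ref{Characterization_N_R} or to Lemma \ref{NFP}, I do not anticipate a substantive obstacle; the only point requiring momentary care is the simplicity check needed to invoke the characterization theorem.
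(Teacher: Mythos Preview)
Your proof is correct and matches the paper's approach: the corollary is stated without proof as an immediate consequence of Theorem~\ref{Characterization_N_R}, and the conjugation step you carry out (sending the unique repelling fixed point to $\infty$ and observing that the resulting Newton map has no finite poles) is exactly the argument the paper later spells out in the proof of Proposition~\ref{conj_to_poly}. One small remark: your first application of Theorem~\ref{Characterization_N_R} to $F$ is harmless but redundant, since you never use $F=N_R$ afterwards; conjugating first and then applying the characterization once to $\tilde F$ suffices.
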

Suppose $z_0$ is a fixed point of rational function $R$. Then the residue fixed point index of $R$ at the fixed point $z_0$ is defined as 
$$
    \iota (R,z_0)= \frac{1}{2 \pi i} {\oint\limits_{\gamma}\frac{1}{z-R(z)}\,\mathrm{d}z},
$$
where $\gamma$ is a small positively oriented closed curve around $z_0$ that does not surround any other fixed point of $R$. If $z_0$ is a simple fixed point with multiplier $\lambda$, then $\iota (R,z_0)=\frac{1}{1-\lambda}$. The sum of the residue fixed point indices of all the fixed points of a rational map is always the same.

\begin{thm}[Theorem 12.4, \cite{Milnor_book}]
    For a rational function $R$ with degree at least two, the sum of residue fixed point indices of all its fixed points in $\widehat{\mathbb{C}}$ is $1$.
    \label{rational-fixedpoint-theorem}
\end{thm}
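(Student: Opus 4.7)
The plan is to invoke the global residue theorem on the compact Riemann surface $\widehat{\mathbb{C}}$, applied to the meromorphic $1$-form $\omega = \frac{dz}{z - F(z)}$. Since $F$ is a rational function that is neither constant nor the identity, $z - F(z)$ is a non-zero rational function, so $\omega$ is a bona fide meromorphic $1$-form on $\widehat{\mathbb{C}}$, and the sum of its residues over all its poles must vanish. Everything then reduces to identifying those residues with the fixed point indices $\iota(F, z_0)$.

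First I would locate the poles of $\omega$. At a finite point $z_0$, $\omega$ has a pole precisely when $z_0 - F(z_0) = 0$, i.e., when $z_0$ is a fixed point of $F$; a pole of $F$ at a finite point is \emph{not} a pole of $\omega$, since there the denominator $z - F(z)$ blows up and $\omega$ has a zero instead. By the very definition of the residue fixed point index, $\operatorname{Res}_{z = z_0} \omega = \iota(F, z_0)$ at each finite fixed point, and at a simple fixed point with multiplier $\lambda \neq 1$ this reduces to $\frac{1}{1 - \lambda}$. The only remaining contribution comes from $\infty$, which I would handle by the change of coordinate $w = 1/z$; writing $\tilde{F}(w) = 1/F(1/w)$, a short calculation gives
\[
\omega \;=\; -\,\frac{\tilde{F}(w)\,dw}{w\bigl(\tilde{F}(w) - w\bigr)}.
\]

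A Laurent expansion near $w = 0$ shows that when $\infty$ is a simple fixed point of $F$ with multiplier $\mu = \tilde{F}'(0)$, the residue equals $\frac{\mu}{1 - \mu} = \iota(F, \infty) - 1$, while when $\infty$ is not a fixed point (so $\tilde{F}(0) \neq 0$) the residue is simply $-1$. Adopting the natural convention that $\iota(F, \infty) = 0$ when $\infty$ is not fixed, both cases collapse to the single identity $\operatorname{Res}_\infty \omega = \iota(F, \infty) - 1$. Summing all residues of $\omega$ to zero then yields
\[
\sum_{z_0 \text{ finite fixed point}} \iota(F, z_0) \;+\; \bigl(\iota(F, \infty) - 1\bigr) \;=\; 0,
\]
which rearranges to $\sum_{\text{all fixed points}} \iota(F, z_0) = 1$, as claimed.

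The main obstacle I anticipate is the bookkeeping for the residue at $\infty$ in full generality, especially when $\infty$ is a multiple (non-simple) fixed point of $F$. In that case the multiplier formula no longer directly gives $\iota$; one must expand $\tilde{F}(w) - w$ to sufficiently high order and verify, using the contour-integral definition $\iota(F, \infty) = \frac{1}{2\pi i}\oint \frac{dw}{w - \tilde{F}(w)}$, that $\operatorname{Res}_{w = 0} \omega = \iota(F, \infty) - 1$ continues to hold. Once this purely local computation is in place, the global identity follows immediately from the vanishing of the total residue of $\omega$ on $\widehat{\mathbb{C}}$.
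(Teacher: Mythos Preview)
The paper does not prove this theorem; it is quoted from Milnor's book as a known result and then invoked without argument. Your outline is in fact the standard proof (essentially Milnor's): apply the residue theorem on the compact surface $\widehat{\mathbb{C}}$ to the meromorphic $1$-form $\omega = dz/(z - F(z))$, identify the finite residues with the indices $\iota(F,z_0)$, and compute the contribution at $\infty$.

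Your anticipated obstacle at a multiple fixed point at $\infty$ dissolves with one line of algebra rather than any higher-order expansion. Writing $\tilde F(w)=1/F(1/w)$, the identity
\[
\frac{-\tilde F(w)}{w\bigl(\tilde F(w)-w\bigr)} \;=\; -\frac{1}{w} \;+\; \frac{1}{w-\tilde F(w)}
\]
holds identically, so $\operatorname{Res}_{w=0}\omega = -1 + \iota(F,\infty)$ regardless of whether $\infty$ is a simple fixed point, a multiple fixed point, or not fixed at all (taking $\iota(F,\infty)=0$ in the last case). Alternatively, since the indices are conjugation-invariant, one may first conjugate by a M\"obius map sending some non-fixed point to $\infty$; then the residue at $\infty$ is exactly $-1$ and the finite residues already sum all the indices. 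Either route completes your argument without further work.
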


We mention a remark before stating a useful consequence of Theorem~\ref{rational-fixedpoint-theorem}.
\begin{rem}
	If $\alpha$ is a pole of a rational function $R$ with multiplicity $l$, then it is a repelling fixed point of $N_R$ with residue index $-l$. Thus, the sum of residue indices of all finite repelling fixed points of $N_R$ is equal to the negative of the degree of the denominator of $R$.
\end{rem}
\begin{cor}\label{ext_one_attr}
    If a Newton map of degree at least two has exactly one attracting fixed point, then that fixed point cannot be superattracting.
\end{cor}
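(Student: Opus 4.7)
My approach is proof by contradiction using the residue fixed point theorem (Theorem~\ref{rational-fixedpoint-theorem}). Let $N$ be a Newton map of degree $d \geq 2$ with exactly one attracting fixed point $z_0$, and suppose toward a contradiction that the multiplier of $z_0$ is $0$. Theorem~\ref{Characterization_N_R} tells me that every fixed point of $N$ is simple, so $N$ has precisely $d+1 \geq 3$ fixed points in $\widehat{\mathbb{C}}$.

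The next step is to pin down the form of all the other multipliers. Lemma~\ref{NFP} shows that every fixed point of a Newton map—each finite root of $R$, each finite pole of $R$, and the point at $\infty$ when it is fixed—has multiplier either $\frac{k-1}{k}$ for some $k \geq 1$ (attracting, with $k=1$ corresponding to a superattracting fixed point) or $\frac{l+1}{l}$ for some $l \geq 1$ (repelling); in particular, a Newton map has no indifferent fixed points. Since $z_0$ is the unique attracting fixed point, the remaining $d$ fixed points must all be repelling, say with multipliers $\frac{l_j+1}{l_j}$ for integers $l_j \geq 1$, $j=1,\dots,d$.

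Finally, I would compute the residue fixed point indices and sum them. The index at $z_0$ is $\frac{1}{1-0}=1$, while the index at a repelling fixed point of multiplier $\frac{l_j+1}{l_j}$ equals $-l_j$. Applying Theorem~\ref{rational-fixedpoint-theorem},
\[
1 + \sum_{j=1}^{d} (-l_j) = 1, \quad\text{hence}\quad \sum_{j=1}^{d} l_j = 0,
\]
which is impossible since $d \geq 2$ and each $l_j \geq 1$. I do not anticipate a serious obstacle; the only point needing care is confirming that a Newton map really admits no indifferent fixed points, so that the $d$ non-attracting indices are all strictly negative integers. This is ensured by inspecting all three cases of Lemma~\ref{NFP}, including the formula $\frac{d-e}{d-e-1}$ for the multiplier at $\infty$, which always takes the form $\frac{k-1}{k}$ or $\frac{l+1}{l}$ and hence is never unimodular.
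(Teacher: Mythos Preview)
Your argument is correct and follows essentially the same route as the paper: both use Theorem~\ref{rational-fixedpoint-theorem} together with the fact (from Lemma~\ref{NFP}) that every non-attracting fixed point of a Newton map is repelling with residue index a negative integer. The paper phrases it directly, deducing $\frac{1}{1-\lambda}>1$ and hence $\lambda>0$, whereas you set $\lambda=0$ and reach the contradiction $\sum_j l_j=0$; these are the same computation.
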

\begin{proof}
	Each fixed point of a Newton map is either attracting or repelling. The multiplier of a repelling fixed point is $\frac{l +1}{l}$ for some integer $l  \geq 1$, and  its residue index is $-l$.
	Letting $\lambda$ to be the multiplier of the attracting fixed point of the Newton map, it follows from Theorem~\ref{rational-fixedpoint-theorem} that $\frac{1}{1-\lambda} >1$. In other words, $\lambda>0$. Thus, the attracting fixed point is not superattracting. 
\end{proof}
\section{Some conformally equivalent Newton maps}\label{Sec3_conf_equiv}
Theorem~\ref{two-attracting-fixedponts} determines all Newton maps (up to conjugacy) with exactly two attracting fixed points, one of which is exceptional. Besides proving this theorem, we discuss some properties of Newton maps with at least one exceptional point. We also determine all the Newton maps with exactly one repelling fixed point, which can be of independent interest.
\begin{prop} 
	A Newton map with degree at least two has exactly one repelling fixed point if and only if it is conjugate to $N_{p}$ for some monic polynomial $p$ with at least two distinct roots. 
	\label{conj_to_poly}
\end{prop}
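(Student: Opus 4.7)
The plan is to split the biconditional and handle each direction separately, with most of the heavy lifting delegated to Corollary~\ref{Characterization_poly}.

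For the easy direction ($\Leftarrow$), I would start with a monic polynomial $p$ having at least two distinct roots and read off the fixed-point data of $N_p$ from Lemma~\ref{NFP}. Writing $R=p$, so $Q\equiv 1$, $e=0$, and $d=\deg p\geq 2$, Lemma~\ref{NFP}(1) shows every distinct root of $p$ is an attracting fixed point of $N_p$, while $p$ has no finite poles so no finite fixed point is repelling. Since $d\neq e+1$, Lemma~\ref{NFP}(3) gives $\infty$ as a fixed point with multiplier $\frac{\deg p}{\deg p - 1}>1$, i.e.\ the unique repelling fixed point. Finally I would apply the degree formula~\eqref{deg_N_R} with $m\geq 2$ and $n=0$ to confirm $\deg N_p \geq 2$, so the statement really delivers a Newton map of the required degree.

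For the forward direction ($\Rightarrow$), suppose $N_R$ has degree at least two and a single repelling fixed point. The key observation is that Lemma~\ref{NFP} forces every fixed-point multiplier of any Newton map to be either $0$ (the simple-root case $k=1$) or of the form $\frac{r}{s}$ with $r,s\in\mathbb{N}$ and $|r-s|=1$ (the multiple root, pole, or $\infty$ cases). Thus $N_R$ satisfies exactly the hypotheses of Corollary~\ref{Characterization_poly}, which immediately yields a polynomial $q$ with $N_R$ conjugate to $N_q$. Two normalizations then finish the argument: since $N_{cq}=N_q$ for every $c\neq 0$, I can replace $q$ by a scalar multiple to obtain a monic $p$; and since conjugacy preserves degree, $\deg N_p \geq 2$, so by~\eqref{deg_N_R} applied to $p$ (a polynomial, hence $n=0$) the number $m$ of distinct roots of $p$ is at least $2$.

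The main obstacle is not really computational but structural: one has to notice that the ``exactly one repelling fixed point'' hypothesis, combined with the rigid shape of Newton-map multipliers from Lemma~\ref{NFP}, dovetails perfectly with Corollary~\ref{Characterization_poly}. Once that matching is in place, everything else is bookkeeping via the scaling property of $N_{(\cdot)}$ and the degree formula~\eqref{deg_N_R}, so I do not anticipate any further technical difficulty.
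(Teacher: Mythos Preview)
Your proof is correct. The backward direction matches the paper's brief treatment. For the forward direction you take a slightly different route: you feed the Newton-map multiplier structure from Lemma~\ref{NFP} directly into Corollary~\ref{Characterization_poly} and read off the conclusion, whereas the paper argues from scratch by moving the unique repelling fixed point to $\infty$ via $\psi(z)=\frac{1}{z-z_0}$, invoking Theorem~\ref{Characterization_N_R} to certify that the conjugate is still a Newton map, and then observing that such a map (with $\infty$ as its only repelling fixed point) must arise from a polynomial. Your approach is more economical because the paper has already packaged precisely that argument as Corollary~\ref{Characterization_poly}; the paper's version is more self-contained in that it does not lean on the unproved corollary. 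The normalization steps (monic via scaling, at least two roots via~\eqref{deg_N_R}) are identical in spirit.
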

\begin{proof}
Let $z_0$ be the repelling fixed point of a Newton map $ N_{R}$ for some rational function $R$. If $z_0 = \infty $, then $R$ cannot have any finite pole, and therefore $R$ is a polynomial. In view of the Scaling property, $N_R$ is conjugate to $N_p$ where $p$ is a monic polynomial. Since the degree of $N_R$ is at least two, $p$ has at least two distinct roots (see Remark~\ref{NM-linear}(1)). If $z_0 $ is finite, then considering  $\psi(z) =\frac{1}{z-z_0}$,  $\psi \circ N_R \circ \psi^{-1}$ is a Newton map by Theorem \ref{Characterization_N_R}. This map has exactly one repelling fixed point, and that is $\infty$. Now, we are done as in the previous case (i.e., $z_0 = \infty$).

\par 
Conversely, if $p$ is a monic polynomial with at least two distinct roots, then the degree of $N_p$ is at least two, and it has exactly one repelling fixed point.
\end{proof}
Every exceptional point is either a fixed point or a $2$-periodic point of a rational function. In both cases, it is superattracting. We show that a Newton map can have exactly two exceptional points only when it is quadratic. 
\begin{prop}[Two exceptional points]\label{two_exp}
	If a Newton map with degree at least two has two exceptional points, then it is  conjugate to $z^2$.
	\label{quadratic-newton}
\end{prop}
\begin{proof}
	If $N$ is a Newton map with degree at least two having exactly two exceptional points, then it is conjugate either to $\frac{1}{z^d}$ or to $z^d$, where $d$ is the degree of $N$ (see Theorem 4.1.2, \cite{Beardon_book}). Since all the fixed points of  $\frac{1}{z^d}$, are repelling (more precisely, each has its multiplier equal to $-d$), it follows from Remark \ref{Atlst_attracting} that $N$ cannot be conjugate to $\frac{1}{z^d}$. Therefore, $N$ is conjugate to $z^d$. For $d \geq 2$, since the multiplier of every non-zero fixed point of $z^d$ is $d$, $N$ has a fixed point with multiplier $d$. It follows from  Theorem~\ref{Characterization_N_R} that if the multiplier of a fixed point of a Newton map is a non-zero integer, then it must be $2$. Thus,  $d= 2$. 
\end{proof}
A Newton map with degree at least three cannot be conjugate to $z^2$ and that gives rise to the following.
\begin{cor}
A Newton map with degree at least three has at most one exceptional point.
\end{cor}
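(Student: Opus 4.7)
The plan is to derive this corollary directly from Lemma~\ref{two_exp} together with the classical fact that any rational function of degree at least two has at most two exceptional points. So the real work is simply to argue by contradiction: assume a Newton map $N$ of degree at least three has two exceptional points, and deduce a contradiction with its degree.

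First I would invoke the classical upper bound: for any rational map of degree at least two, the exceptional set has at most two elements (see, e.g., Theorem 4.1.2 in \cite{Beardon_book}, which is already cited in the proof of Lemma~\ref{two_exp}). So the only way for the corollary to fail is that $N$ has \emph{exactly} two exceptional points. Next I would apply Lemma~\ref{two_exp} to conclude that such an $N$ must be conjugate to $z^2$. Since conjugation preserves the degree, this forces $\deg(N) = 2$, contradicting the hypothesis $\deg(N) \geq 3$. Hence $N$ has at most one exceptional point.

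There is essentially no obstacle here, since the hard content has been packaged into Lemma~\ref{two_exp}; the only thing to be careful about is to explicitly note that having zero or one exceptional point is consistent with the statement (the corollary asserts an upper bound, not an existence), and that the Möbius conjugation used in Lemma~\ref{two_exp} preserves degree. Thus the proof will be a short two-sentence deduction from Lemma~\ref{two_exp}.
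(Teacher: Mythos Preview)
Your proposal is correct and matches the paper's approach exactly: the corollary is stated immediately after Lemma~\ref{two_exp} with no explicit proof, since it is precisely the contrapositive-style deduction you describe. The only content is that two exceptional points would force conjugacy to $z^2$ and hence degree two, contradicting $\deg(N)\geq 3$.
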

There is a remark. 
\begin{rem}
	If a Newton map $N$ with degree exactly two has two exceptional points, then it is conjugate to $N_{R_i}, i \in \{1,2,3\}$, where $R_1 (z)=z(z-1)$, $R_2 (z)= \frac{z}{z-1}$, and  $R_3 (z)=\frac{z^2-1}{z}.$ To see it, note that $N$ has three fixed points, say $a,b,c$ with respective multipliers $0,0$ and $2$. Let $N=N_R$ for some rational function $R$. 
	\par If $c=\infty$, then $R$ is a polynomial. By the Scaling property, $N_R$ is conjugate to $N_{R_{1}}$. If $c \neq \infty$, then $R$ has a finite pole. There are two possibilities: one of $a, b$ is $\infty$, or both are finite. In the first case, assuming $a= \infty$, we see that $N$ is conjugate to $N_{R_2}$. Similarly, $N$ is conjugate to $N_{R_3}$ in the other case.
\end{rem}

Here is an observation on Newton maps arising out of polynomials.
\begin{prop}
	Let $d \geq 2$ and $p$ be a polynomial with $d$ distinct roots. Further, let there be a single simple root and all other roots have the same multiplicity, say $m \geq 1$. If the Newton map $N_p$ has an exceptional point, then it is conjugate to $N_{z(-1+z^{d-1})^m}$.
	\label{polynewtonmap-conj-poly}
\end{prop}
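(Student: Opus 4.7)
The plan is to pin down the exceptional point using the multiplier classification of Lemma~\ref{NFP}, translate the exceptional condition into an algebraic constraint on $\tilde{q}(z) := \prod_{i=1}^{d-1}(z-b_i)$ via the local degree of $N_p$ at that point, and finally apply the Scaling property (Lemma~\ref{scaling}) to reduce $p$ to the canonical form $z(z^{d-1}-1)^m$.

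First I would locate the exceptional point. By Lemma~\ref{NFP}, a root of $p$ of multiplicity $\mu$ is a fixed point of $N_p$ with multiplier $(\mu-1)/\mu$, and $\infty$ is repelling for $N_p$ since $p$ is a polynomial of degree at least two. An exceptional point is necessarily superattracting, so it must be the simple root of $p$; call it $a$. For $d \geq 3$, the corollary following Lemma~\ref{two_exp} guarantees that $a$ is the unique exceptional point, so its grand orbit is $\{a\}$, and thus $N_p^{-1}(a)=\{a\}$ set-theoretically. Consequently the local degree of $N_p$ at $a$ equals the global degree $\deg(N_p)=d$. The case $d=2$ will be handled by the same algebra, since then $\deg \tilde{q}'=0$ places no nontrivial constraint.

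Next I would derive the algebraic constraint. Writing $p(z)=c(z-a)\tilde{q}(z)^m$ with $\tilde{q}(a)\neq 0$, a short logarithmic-derivative computation yields
\[
N_p(z)-a \;=\; \frac{m(z-a)^2\,\tilde{q}'(z)}{\tilde{q}(z)+m(z-a)\tilde{q}'(z)}.
\]
Since $\tilde{q}(a)\neq 0$, the denominator is nonzero at $a$, so the order of vanishing of $N_p(z)-a$ at $a$ equals $2+\operatorname{ord}_a(\tilde{q}')$. Setting this equal to $d$ forces $\operatorname{ord}_a(\tilde{q}')=d-2$; combined with $\deg \tilde{q}'=d-2$ and the monic normalization of $\tilde{q}$, this pins down $\tilde{q}'(z)=(d-1)(z-a)^{d-2}$. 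Antidifferentiating, $\tilde{q}(z)=(z-a)^{d-1}+\delta$ for a constant $\delta\neq 0$ (nonzero because $a$ is not among the $b_i$).

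Finally, I would normalize by scaling. Choose $\rho\in\mathbb{C}^{*}$ with $\rho^{d-1}=-\delta$ and set $T(z)=\rho z+a$. A direct substitution gives $\tilde{q}(T(z))=-\delta(z^{d-1}-1)$, and hence $p(T(z))=c\rho(-\delta)^m\,z(z^{d-1}-1)^m$. Setting $\lambda=\bigl[c\rho(-\delta)^m\bigr]^{-1}$, one has $\lambda\,p(T(z))=z(z^{d-1}-1)^m$, and Lemma~\ref{scaling} yields $T\circ N_{z(z^{d-1}-1)^m}\circ T^{-1}=N_p$, the desired conjugacy. The only delicate point in this strategy is the local-degree step: I need to know that the exceptional point $a$ accounts for all $d$ pre-images of itself, which is precisely what the ``at most one exceptional point'' statement for $d\geq 3$ supplies; the remaining degree-$2$ case is automatic since $\tilde{q}'$ is already a nonzero constant.
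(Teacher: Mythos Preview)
Your proposal is correct and takes essentially the same approach as the paper: locate the exceptional point at the simple root, compute $N_p(z)-a=\dfrac{m(z-a)^2\tilde q'(z)}{\tilde q(z)+m(z-a)\tilde q'(z)}$, use exceptionality to force $\tilde q'(z)$ to be a constant multiple of $(z-a)^{d-2}$, integrate, and normalize via the Scaling property. You are simply more explicit than the paper in justifying why the exceptional point must be the simple root (via Lemma~\ref{NFP}) and why $N_p^{-1}(a)=\{a\}$ when $d\ge 3$; the paper compresses these steps into a single sentence.
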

\begin{proof}
The exceptional point of $N_p$ is assumed to be $0$ without loss of generality in view of the Scaling property. Then $p$ can be represented as $p(z)=z(q(z))^m$, where $q$ is a generic polynomial with $q(0)\neq 0$, and
	$$
	N_p(z)= z-\frac{zq(z)}{q(z)+mzq'(z)}=\frac{mz^2q'(z)}{q(z)+mzq'(z)}.$$
	As $0$ is an exceptional point of $N_p$ and the degree of $N_p$ is $d$, we have $q'(z)=\lambda z^{d-2}$ for some $\lambda\neq 0$. Therefore,  $q(z)=\frac{\lambda}{d-1}z^{d-1}+c$, for some non-zero constant $c$. Now, using the Scaling property, we can take $c=\frac{\lambda}{d-1}$ and hence $N_p$ is conjugate to $N_{z(-1 +z^{d-1})^m}$.
\end{proof}
We now present the proof of Theorem~\ref{two-attracting-fixedponts}.
  
\begin{proof}[Proof of Theorem~\ref{two-attracting-fixedponts}]
Let $z_1, z_2$ be the two attracting fixed points of a Newton map $N$ such that $z_2$ is exceptional. Then considering $\phi(z)=\frac{z-z_1}{z-z_2}$, it is seen that $\infty$ and $0$ are the only attracting fixed points of $\phi \circ N \circ \phi^{-1}$ and $\infty$ is exceptional. The map  $\phi \circ N \circ \phi^{-1}$ is also a Newton map by Theorem~\ref{Characterization_N_R}.  In particular,  $\phi \circ N \circ \phi^{-1}$ is a polynomial. If  $R$ is a rational function such that $\phi \circ N \circ \phi^{-1}=N_R$, then $0$ is the only finite root of $R$. Since $N_R$ has at least one repelling fixed point, there is a pole of $R$. Thus, $R(z)=\frac{c z^d}{p(z)},$ where  $d \geq 1, c  \neq 0$ and $p$ is a polynomial with $p(0) \neq 0$. Since $\infty$ is a superattracting fixed point of $N_R$, the degree of $p$ is $d$, by Lemma~\ref{NFP}(2). Further, in view of the Scaling property, we can take $p$ to be monic and $c=1.$ Thus, $R(z)=\frac{z^d}{p(z)}$ for a monic $p$ with $p(0) \neq 0$ and $\deg(p)=d$. Therefore,
$$N_R(z)=z-\frac{z p(z)}{d p(z)-z p'(z)}.$$
Letting $p(z)=\prod_{i=1}^{k} (z-\alpha_i)^{m_i} $ where each $\alpha_i$ is a  root with multiplicity $m_i \geq 1$, we observe that,
$$dp(z)-zp'(z)= \prod_{i=1}^{k} (z-\alpha_i)^{m_i -1} \left(d \prod_{i=1}^{k} (z-\alpha_i)  -z \sum_{i=1} ^{k} m_i \left(\prod_{j \neq i} (z-\alpha_j)\right) \right) .$$ 

Take 
\begin{equation} g(z) =d \prod_{i=1}^{k} (z-\alpha_i)  -z \sum_{i=1} ^{k} m_i \left(\prod_{j \neq i} (z-\alpha_j)\right),
\label{g}\end{equation}  and note that $g(z)$ and $zp(z)$ have no common factor.  Therefore,
\begin{equation}
   N_{\frac{z^d}{p(z)}} ~\mbox{is a polynomial only when } g(z)   ~\mbox{is a non-zero constant.}
   \label{g-constant} 
\end{equation} 

\begin{enumerate}
\item Let $p$ be generic. If $p$ is linear, then it follows from the Scaling property that $N_R(z)$ is conjugate to $N_{\frac{z}{z-1}}$. 
\par Let $p$ be non-linear. As $p$ is generic and $N_{ z^d/p(z)}$ is a polynomial, we have $zp'(z) -dp(z)=\alpha $ for some non-zero $\alpha$. Letting  $y=p(z)$,  we have the first-order linear differential equation $y'-\frac{d}{z}y=\frac{\alpha}{z}$ .  The solution is
    $$
    \begin{aligned}
    	y \cdot \frac{1}{z^d}
    	&=\int \frac{\alpha}{z} \cdot \frac{1}{z^d} d z+\beta=-\frac{\alpha}{d z^d } +\beta,
    \end{aligned}
    $$
  for an arbitrary constant $\beta$.
    Therefore, $ p(z)=\beta z^d-\frac{\alpha}{d}$ and $R(z)=\frac{z^d}{ \beta z^d -\frac{\alpha}{d}}$. Consider $c$ such that $c^d = \frac{\alpha}{d \beta}$ and use the
  Scaling property to see that $N_R$ is conjugate to $N_{\beta R ( cz)}$. We are done since $\beta R(cz)= \frac{z^d}{ z^d -1}$ and the resulting Newton map is $\frac{z^{d+1} +(d-1)z}{d}$.
 
 \item Let $n$ denote the number of distinct roots of $p$. Using the Scaling property, we assume without loss of generality that $1$ is a multiple root of $p$ whenever $p$ is not generic. Along with this, what is going to be repeatedly used in all the following cases is that $g(z) $ is a non-zero constant (see Equation~(\ref{g})).
{\begin{enumerate}
		\item Let $\deg(p)=3$. Then there are three cases depending on the values of $n$. If $n=3$, then $p$ is generic, and from the first part of this theorem, it follows that $p(z)=z^3-1$, and hence $N_R(z)=\frac{1}{3}z(z^3+2)$. If $n=2$, then  $p$ has a root with multiplicity $2$, and therefore  $p(z)=(z-1)^2(z-a)$, where $a\neq 0,1$. In this case, $g(z) =-(a+2)z+3a$, and therefore  $a=-2$. Thus,   $N_R(z)= \frac{z}{6}(z^2+z+4)$. If $n=1$, then  $p(z)=(z-1)^3$, and we get  $N_R(z)=\frac{1}{3}z(z+2)$. 
		\item Let $\deg(p)=4$. All possible cases of $p$ and the resulting Newton maps are given in Table~\ref{d=4}.
		
 \begin{table}
\centering			
\begin{tabular}{|>{\centering\arraybackslash}m{0.5cm}|>{\centering\arraybackslash}m{3.5cm}|>{\centering\arraybackslash}m{4.5cm}|>{\centering\arraybackslash}m{2.5cm}|>{\centering\arraybackslash}m{2.5cm}| }
\hline
\textbf{ $n$} & \textbf{$p(z)$}& $g(z)$&  $a$,   $b$& $N_{\frac{z^4}{p(z)}}(z)$  \\ 
\hline
\hline
4 			& $z^4 -1$			&\centering -		 &\centering - &$\frac{z(z^4 +3)}{4}$\\ 
\hline
3 			&  $(z-1)^2(z^2+az+b)$&$(a-2)z^2+(-3a+2b)z-4b$& $a=2, b=3$  &$\frac{z(z^3+z^2+z+9)}{12}$  \\ 
\hline
2&(i) $(z-1)^2(z-a)^2$& $2(2a-(a+1)z)$&$a=-1$&$\frac{z(z^2 +3)}{4}$ \\ 
\hline
2 &(ii) $(z-1)^3(z-a) $& $-(a+3)z+4a$&$a=-3$&$\frac{z(z^2+2z+9)}{12}$\\ 
\hline
1 &  $(z-1)^4$&4&-  &$\frac{z(z+3)}{4}$\\ 
\hline		
\end{tabular}
\caption{Newton  maps  $N_{\frac{z^4}{p(z)}}$  with an exceptional point}
\label{d=4}
\end{table}
\item For $\deg(p)=5$, all the possible forms of $p$ and resulting Newton maps are given in Table~\ref{d=5}.
	\begin{table}
		\centering			
	\begin{tabular}{|c |c|c|c|c| }
		\hline
\textbf{ $n$} & \textbf{$p(z)$}& $g(z)$&  $a,b, c$& $N_{\frac{z^5}{p(z)}}(z)$  \\ 
		\hline
		\hline
5 & $z^5 -1$			&-		 &-		  &$\frac{z(z^5 +4)}{5}$\\ 
		\hline
4& $(z-1)^2 (z^3 +$	&$(a-2)z^3+(-3a+2b)z^2 $ &$a=2$ 	& \\ 
 & $ a z^2+bz+c)$	&$ +(-4b+3c)z-5c$		 & 	  $b=3$ &$\frac{z(z^4 +z^3 +z^2 +z+16)}{20}$\\ 
  &  				& 						 & 	  $c=4$  &			\\ 
\hline
3& (i)$(z-1)^2(z-a)^2$&$-(2a+b+2)z^2$&$a=\frac{-2+ i\sqrt{5}}{3}$& $\frac{z}{10(\sqrt{5}-7i)} (-9i z^3 +$\\
	&  $(z-b)$ 		&$ (4a+3b+3ab)z-5ab$&$b= \frac{-2- i 2 \sqrt{5}}{3}$& $(3\sqrt{5}-3i)z^2+ $	\\ 
	&   			& 					&					 	&   $(-\sqrt{5}-2i)z+  $  		\\ 
	&   			& 					&	 					&	$ 8 \sqrt{5}-56i)$	   		\\ 
\hline
3&(ii)$(z-1)^2(z-a)^2$&$-(2a+b+2)z^2$&$a=\frac{-2- i\sqrt{5}}{3}$&          $\frac{z}{10(\sqrt{5}+7i)} (9i z^3 +$\\
&  $(z-b)$ 		&$ (4a+3b+3ab)z-5ab$	&		$b= \frac{-2+ i 2 \sqrt{5}}{3}$	&$(3\sqrt{5}+3i)z^2+ $	\\ 
&   	    	&					 	&									 	& 	$(-\sqrt{5}+2i)z+  $\\ 
&   	    	&					 	&									 	& 	$8\sqrt{5}+56i)   $\\ 
\hline
3&(iii)$(z-1)^3$& $(a-3)z^2 -2(2a -b)z$&$a= 3$& $\frac{z }{30}(z^3 +2z^2$ \\ 
 & $(z^2 +az+b)$& $-5b$				&$b=6$ & $  +3z+24)$ \\ 
\hline
2&(i)$(z-1)^3(z-a)^2$& $(2a+3)z-5a$&$a= -\frac{3}{2}$& $\frac{z(2z^2 +z +12)}{15}$ \\ 
 & 			&			&				 &						   \\ 
\hline
2 & (ii)$(z-1)^4(z-a)$& $(a+4)z-5a$   &$a=-4$ & $\frac{z(z^2 +3z +16)}{20}$ \\ 
 &  & &  &  \\ 
\hline
1 &  $(z-1)^5$&5&-& $\frac{z(z+4)}{5}$\\ 
\hline		
	
\end{tabular}
\caption{Newton  maps  $N_{\frac{z^5}{p(z)}}$  with an exceptional point }
\label{d=5}	
\end{table}
 \end{enumerate}}	
 \end{enumerate} 
Hence the proof is complete.
\end{proof}

\section{Totally disconnected Julia sets}\label{Sec4_totally_discon}
This section discusses a class of Newton maps with totally disconnected Julia sets. We start with a basic observation on Newton maps with totally disconnected Julia sets.
\begin{prop}
If the Julia set of a Newton map is totally disconnected, then the Newton map is not conjugate to any polynomial.
\end{prop}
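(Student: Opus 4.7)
The plan is to prove the contrapositive: if a Newton map $N$ of degree at least two is conjugate to a polynomial, then $\mathcal{J}(N)$ is not totally disconnected. Since Möbius maps preserve both connectivity and topological structure, conjugacy preserves (total) disconnectedness of the Julia set, so this formulation is equivalent.

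The first step is to extract a superattracting fixed point. Suppose $N$ is conjugate via a Möbius map $\phi$ to a polynomial $P$ of degree $d\geq 2$. Then $\infty$ is a superattracting fixed point of $P$ (its multiplier, computed via $\tilde P(z)=1/P(1/z)$, equals $0$), so $\phi(\infty)$ is a superattracting fixed point of $N$. By Remark~\ref{Atlst_attracting}, $N$ has at least one attracting fixed point; but Corollary~\ref{ext_one_attr} forbids a Newton map of degree at least two from having a unique attracting fixed point that is superattracting. Hence $N$ must possess at least two attracting fixed points.

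Next, I would translate this into a statement about the topology of the Fatou set. Two distinct attracting fixed points lie in two distinct immediate basins, each of which is a (non-empty, open, connected) Fatou component. Consequently $\mathcal{F}(N)$ has at least two connected components, i.e., it is disconnected.

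Finally, I would invoke the fact that a compact totally disconnected subset of $\widehat{\mathbb{C}}$ does not separate the sphere: its complement is connected (a standard consequence of $0$-dimensional compacta not disconnecting $S^{2}$). Therefore, if $\mathcal{J}(N)$ were totally disconnected, then $\mathcal{F}(N)=\widehat{\mathbb{C}}\setminus\mathcal{J}(N)$ would be connected, contradicting the previous paragraph. The only step that requires any outside input is this topological separation fact, and it is classical rather than a genuine obstacle; all the function-theoretic content reduces to the combination of Remark~\ref{Atlst_attracting} with Corollary~\ref{ext_one_attr}.
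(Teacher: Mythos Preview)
Your proof is correct and follows essentially the same approach as the paper: both arguments hinge on the combination of Corollary~\ref{ext_one_attr} (a sole attracting fixed point of a Newton map cannot be superattracting) with the fact that a polynomial always has a superattracting fixed point at $\infty$, together with the topological fact that a totally disconnected Julia set forces a connected Fatou set. The paper runs the implication directly (totally disconnected $\Rightarrow$ one attracting fixed point $\Rightarrow$ not superattracting $\Rightarrow$ not conjugate to a polynomial), whereas you phrase it as the contrapositive and make the topological separation step more explicit, but the content is the same.
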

\begin{proof}
Recall that the fixed points of every Newton map are either attracting or repelling. Since the Julia set is totally disconnected, the Newton map has exactly one attracting fixed point. However, this attracting fixed point is not superattracting (see Corollary \ref{ext_one_attr}).
As $\infty$ is always a superattracting fixed point of every non-linear polynomial and conjugacy preserves the multiplier of fixed points, we conclude that the Newton map is not conjugate to any polynomial.
\end{proof}
 
\par  
 To prove Theorem~\ref{diconnected_gen}, we need the following lemmas. For an attracting fixed point $z_0$ of a rational function $R$, the attracting basin of $z_0$ is the set $\{z: R^n (z) \to z_0~\mbox{as} ~n \to \infty\}$. This set is always open, and its connected component containing $z_0$ is known as the immediate basin of $z_0$, which is denoted by $\mathcal{A}_{z_0}$.

\begin{lem}\label{totally_disconnected} (Theorem 7.5.1, Theorem 9.8.1, \cite{Beardon_book})
    If a rational function $R$ with degree at least two has an invariant immediate basin $U$, then $U$ contains at least one critical point of $R$. Moreover, if $U$ contains all the critical points of $R$, then the Julia set of $R$ is totally disconnected.
\end{lem}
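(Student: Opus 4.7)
For the first statement, I would argue by contradiction via lifting to the universal cover. Assume the invariant immediate basin $U$ of an attracting fixed point $z_0$ contains no critical point of $F$. If $z_0$ were superattracting, then $z_0$ itself would be a critical point in $U$, so I may assume $0<|F'(z_0)|<1$. Then $F|_U : U \to U$ is an unramified proper holomorphic self-map, hence a covering. The domain $U$ is hyperbolic (a subdomain of $\widehat{\mathbb{C}}$ omitting the Julia set, which contains at least three points), so it has universal cover $\pi : \mathbb{D} \to U$; the map $F|_U$ then lifts to a conformal automorphism $\tilde{F}$ of $\mathbb{D}$ fixing some preimage $\tilde{z}_0$ of $z_0$. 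The relation $F \circ \pi = \pi \circ \tilde{F}$ gives $F'(z_0) = \tilde{F}'(\tilde{z}_0)$, and since a M\"obius automorphism of $\mathbb{D}$ with an interior fixed point has derivative of modulus $1$ there, I would obtain $|F'(z_0)| = 1$, contradicting that $z_0$ is attracting.

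For the second statement, I would first note that the assumption that all critical points of $F$ lie in $U$ makes $F$ a hyperbolic rational map: the forward orbit of every critical point stays in $U$ and accumulates only at $z_0 \in U$, so the post-critical set is compactly contained in the Fatou set. Standard hyperbolic theory then yields a conformal metric $\rho$ on a neighborhood $N$ of $\mathcal{J}(F)$ satisfying $|F'(z)|_\rho \geq \lambda > 1$ for all $z \in N$. Combining Sullivan's no-wandering-domains theorem with the classification of periodic Fatou components (each periodic cycle of attracting or parabolic basins, Siegel disks, or Herman rings requires its own critical point), the only periodic Fatou component is $U$, whence $\mathcal{F}(F) = \bigcup_{n \geq 0} F^{-n}(U)$.

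Next, setting $W_n := \widehat{\mathbb{C}} \setminus F^{-n}(U)$, I would use that $(W_n)$ is a nested decreasing sequence of compact sets with $\bigcap_n W_n = \mathcal{J}(F)$, so by compactness $W_n \subset N$ for all $n$ past some $n_0$. Since $\operatorname{crit}(F) \subset U$, the iterate $F^n : W_n \to W_0$ is an unramified covering. For $n > n_0$ and any connected component $C$ of $W_n$, the first $n - n_0$ images of $C$ under $F$ remain in $N$, so the uniform expansion yields
$$\operatorname{diam}_\rho(C) \leq \lambda^{-(n - n_0)} \operatorname{diam}_\rho(F^{n - n_0}(C)) \leq \lambda^{-(n - n_0)} \operatorname{diam}_\rho(W_{n_0}),$$
which tends to $0$ uniformly in $C$. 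Since $\rho$ is comparable to the spherical metric on the compact set $\overline{N}$, the spherical diameters of components of $W_n$ also shrink to $0$, and thus the connected components of $\mathcal{J}(F) = \bigcap_n W_n$ are single points, proving total disconnectedness.

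The main obstacle is making the expansion estimate rigorous, in particular producing the forward-invariant neighborhood $N$ and the expanding metric $\rho$; I would invoke the standard existence results for hyperbolic rational maps rather than build them from scratch.
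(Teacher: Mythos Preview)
The paper does not prove this lemma at all: it is stated with a bare citation to Beardon's Theorems 7.5.1 and 9.8.1 and then used as a black box. So there is no ``paper's own proof'' to compare against; what one can compare to is Beardon's original arguments.

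Your treatment of the first assertion is essentially the standard proof (and is correct): the absence of critical points makes $F|_U$ an unramified proper map, hence a covering, and the lift to the disk forces $|F'(z_0)|=1$.

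For the second assertion your hyperbolicity route is valid in outline, but it is heavier than necessary and the diameter inequality
\[
\operatorname{diam}_\rho(C)\le \lambda^{-(n-n_0)}\operatorname{diam}_\rho\bigl(F^{\,n-n_0}(C)\bigr)
\]
does not follow directly from $|F'|_\rho\ge\lambda$: expansion of path-lengths bounds the diameter of $C$ only once you know that any two points of $F^{\,n-n_0}(C)$ can be joined by a $\rho$-short path \emph{inside} that image (so that the path can be lifted through the covering). You have not arranged this, and for arbitrary components of a closed set it can fail. The usual fix, and the way Beardon argues, is to first produce a simply connected $D$ with $z_0\in D$, $F(\overline{D})\subset D$, and all critical \emph{values} of $F$ in $D$; then $K=\widehat{\mathbb{C}}\setminus D$ is a closed topological disk, $F^{-1}(K)\subset\operatorname{int}(K)$, and $F:F^{-1}(K)\to K$ is an unbranched covering of a simply connected set, hence a disjoint union of $\deg(F)$ homeomorphic copies of $K$. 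Iterating and using the hyperbolic metric of $\operatorname{int}(K)$ (where each inverse branch is a strict contraction by the Schwarz--Pick lemma, no separate ``expanding metric'' needed) gives the shrinking of components and hence total disconnectedness of $\mathcal{J}(F)=\bigcap_n F^{-n}(K)$. Your appeal to Sullivan and the full hyperbolic machinery is correct but avoidable; the simply connected disk $D$ does all the work.
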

\begin{lem}
	If $\psi$ is a homeomorphism such that $\psi^{-1} \circ R \circ \psi =R$ for a rational function $R$, then $\psi(\mathcal{J}(R)) =\mathcal{J}(R)$.
	\label{symmetry} 
	\end{lem}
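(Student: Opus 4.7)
The plan is to derive that $\psi$ commutes with every iterate of $F$, and then transfer the equicontinuity characterization of the Fatou set across $\psi$.

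First, from $\psi^{-1}\circ F\circ\psi=F$ I get $F\circ\psi=\psi\circ F$, hence by induction $F^n\circ\psi=\psi\circ F^n$ for every $n\geq 0$. Applying $\psi^{-1}$ on both sides of $F\circ\psi=\psi\circ F$ also gives $\psi^{-1}\circ F=F\circ\psi^{-1}$, so $\psi^{-1}$ commutes with $F$ as well. Thus the roles of $\psi$ and $\psi^{-1}$ are symmetric in what follows.

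Next, I would show $\psi(\mathcal{F}(F))\subseteq \mathcal{F}(F)$. Fix $z_0\in\mathcal{F}(F)$, and pick a neighborhood $U$ of $z_0$ on which $\{F^n\}_{n\geq 0}$ is equicontinuous with respect to the spherical metric. Then $\psi(U)$ is an open neighborhood of $\psi(z_0)$. Given $w,w'\in\psi(U)$, write $u=\psi^{-1}(w)$, $u'=\psi^{-1}(w')$, so that $F^n(w)=\psi(F^n(u))$ and $F^n(w')=\psi(F^n(u'))$. Because $\psi$ and $\psi^{-1}$ are continuous on the compact space $\widehat{\mathbb{C}}$, they are uniformly continuous in the spherical metric. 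A short $\varepsilon$--$\delta$ chain then propagates equicontinuity at $z_0$ on $U$ to equicontinuity at $\psi(z_0)$ on $\psi(U)$: given $\varepsilon>0$, pick $\delta_1$ via uniform continuity of $\psi$ so that $d(a,b)<\delta_1$ forces $d(\psi(a),\psi(b))<\varepsilon$; pick $\delta_2$ from equicontinuity of $\{F^n\}$ on $U$ so that $d(u,u')<\delta_2$ forces $d(F^n(u),F^n(u'))<\delta_1$ for all $n$; pick $\delta_3$ from uniform continuity of $\psi^{-1}$ so that $d(w,w')<\delta_3$ forces $d(u,u')<\delta_2$. Hence $\psi(z_0)\in\mathcal{F}(F)$.

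Running the same argument with $\psi^{-1}$ in place of $\psi$ gives $\psi^{-1}(\mathcal{F}(F))\subseteq\mathcal{F}(F)$, and therefore $\psi(\mathcal{F}(F))=\mathcal{F}(F)$. Taking complements in $\widehat{\mathbb{C}}$ yields $\psi(\mathcal{J}(F))=\mathcal{J}(F)$. The only mildly subtle point — and the one I would flag as the ``main step'' — is that $\psi$ is only assumed to be a homeomorphism, not a conformal map, so the argument must use the purely topological (equicontinuity) definition of $\mathcal{J}(F)$ together with uniform continuity of $\psi,\psi^{-1}$ in the spherical metric, rather than characterizations that rely on multipliers of periodic points.
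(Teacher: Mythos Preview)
Your proof is correct and is precisely the standard equicontinuity argument that the paper defers to Beardon's book (Theorem~3.1.4) rather than writing out. The commutation $F^n\circ\psi=\psi\circ F^n$ together with uniform continuity of $\psi,\psi^{-1}$ on the compact sphere is exactly the mechanism that makes the transfer of equicontinuity work, and you have identified and executed it cleanly.
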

	The proof of the above lemma is straightforward (see Theorem 3.1.4, \cite{Beardon_book}). 

\begin{proof}[Proof of Theorem \ref{diconnected_gen}]
	First, we apply the Scaling property to arrive at different Newton maps up to conjugacy.
	\par  If $p$ has two distinct roots, then $p(z)=c(z-a)^m(z-b)^n$, where $a,b,c \in \mathbb{C}, a\neq b, c \neq 0$ and $m,n \geq 1$. For $T(z)=(b-a)z+a$ and $\lambda =c (b-a)^{m+n}$, we have that $N_{1/p}$ is conjugate to $N_{ \lambda/p(T(z))}=N_{1/z^m (z-1)^n}$.
	If $p$ is uncritical, i.e., $p(z)=c(z-a)^n +d$, for some $c,a, d\in \mathbb{C}, c \neq 0$, and $n \geq 2$, then $d \neq 0$ by assumption. Considering $T(z)=\alpha z+a$, where $\alpha^n=\frac{d}{c}$, and $\lambda =d$, we see that $N_{1/p}$ is conjugate to $N_{ \lambda/p(T(z))}=N_{1/z^n +1}$. In the third case, taking $T(z)=\alpha z, \alpha^n =a$ and $\lambda =\alpha a$, we see that $N_{1/p}$ is conjugate to  $N_{1/z(z^n +1)}$. Denoting the Newton maps in the first, second, and third cases as $N_0, N_1$, and $N_2$ respectively, we have
    \begin{align}
       & N_0(z)=\frac{z((m+n+1)z-(m+1))}{(m+n)z-m},\\
       & N_1(z)=\frac{(n+1)z^n+1}{nz^{n-1}}, ~~\mbox{and} \\
       & N_2(z)=\frac{z((n+2)z^n+2)}{(n+1)z^n+1}.
    \end{align}
Since all the coefficients of $N_i$, $i=0,1,2$, are real, we have $\overline{N_i(z)}=N_i(\bar{z})$, i.e., $\mathcal{J}(N_i)$ is symmetric about the real axis (see Lemma~\ref{symmetry}). Further, the point at $\infty$ is an attracting fixed point for each $N_i$. Let $\mathcal{A}_\infty$ denote the immediate basin of attraction of $\infty$ in each case. Moreover, for $j=1,2$, we have $N_j(\lambda z)=\lambda N_j(z)$ whenever $\lambda^n=1$. Therefore, 
$\{z\mapsto \lambda z: \lambda^n=1 \}\subseteq \Sigma N_j, j=1,2.$  Observe that,
\begin{equation}\label{sym_New}
\mathcal{A}_{\infty}~\mbox{is preserved under}~ z \mapsto \lambda z ~\mbox{where}~ \lambda^n =1, ~\mbox{as well as under}~ z \mapsto \overline{z}.\end{equation}

	To prove that $\mathcal{J}(N_i)$ is totally disconnected, it is enough to show that all the critical points of $N_i$ are in $\mathcal{A}_\infty$ (by Lemma \ref{totally_disconnected}).
	\begin{enumerate}
    \item As $\deg(N_0)=2$ and $N_0'(z)=\frac{(m+n+1)(m+n)z^2-2m(m+n+1)z+m(m+1)}{((m+n)z-m)^2}$, we have $N_0$ has exactly two critical points, namely the roots of $$(m+n+1)(m+n)z^2-2m(m+n+1)z+m(m+1)=0.$$ As $(-2m(m+n+1))^2-4(m+n+1)(m+n)m(m+1)=-4mn(m+n+1)<0,$ these two critical points are complex conjugates to each other. As $\mathcal{A}_\infty$ must contain a critical point by Lemma~\ref{totally_disconnected}, it contains both of them by Equation~(\ref{sym_New}). Hence, $\mathcal{A}_\infty$ contains all the critical points of $N_0$.
	\item The origin is the only pole of  $N_1$, and it is in the attracting basin of $\infty$. Note that $N_1'(z)=\frac{(n+1)z^n-(n-1)}{nz^n}$. If $n=2$, then the critical points of $N_1$ are $\pm \frac{1}{\sqrt{3}}$. One of these must be in $\mathcal{A}_\infty$. Therefore, the other is also in $\mathcal{A}_\infty$ by Equation~(\ref{sym_New}). 
	\par Let $n\geq 3$. Then the pole (at the origin) is a critical point of $N_1$ with multiplicity $n-2$. The origin is in the basin of $\infty$. Other critical points of $N_1$ are solutions of $z^n=\frac{n-1}{n+1}$. Among them, one is  positive. Let it be  $c$ (see Figure (\ref{cr_graph_N1})). Note that $N_1(x) >x$ and $N_1 '(x)>0$ for all $x >c$. This gives that  
	$\lim_{k\to \infty}N^k_1(x)=\infty$ for all $x>c$. Since $N_1 (c)>c$, we have $\lim_{k\to \infty}N^k_1(c)=\infty$. In other words, $c \in \mathcal{A}_{\infty}$.
	It follows from Equation~(\ref{sym_New}) that all other $(n-1)$ many critical points are in $\mathcal{A}_\infty$.
	
    \begin{figure}[h!]
	\begin{subfigure}{.5\textwidth}
		\centering
		\includegraphics[width=1\linewidth]{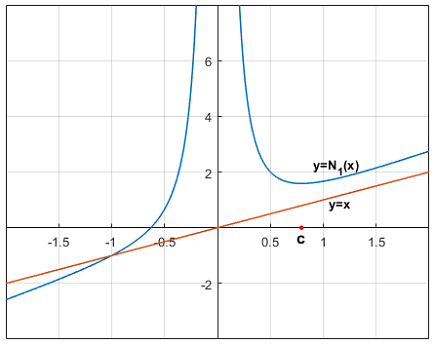}
		\caption{$n=3: \frac{4z^3 +1}{3z^2}$}
	\end{subfigure}%
	\begin{subfigure}{.5\textwidth}
		\centering
        \includegraphics[width=1\linewidth]{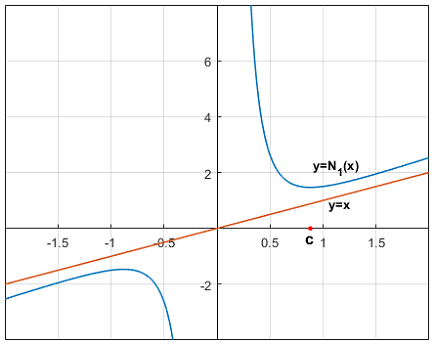}
		\caption{$n=4:\frac{5z^4 +1}{4z^3} $}
	\end{subfigure}
	\caption{The graph of $N_1$: There is a negative fixed point for odd $n$ and this is not the case for even $n$.}
    \label{cr_graph_N1}
\end{figure}
	\item Note that 
	\begin{equation}\label{crit_N2}
	N_2'(z)=\frac{(n+1)(n+2)z^{2n}-(n+1)(n-4)z^n+2}{((n+1)z^n+1)^2}.
	\end{equation}
	Thus the critical points of $N_2$ are the solutions of $z^n=c_1$ or $z^n=c_2$, where $c_1=\frac{(n+1)(n-4)+n\sqrt{(n+1)(n-7)}}{2(n+1)(n+2)}$ and $~c_2=\frac{(n+1)(n-4)-n\sqrt{(n+1)(n-7)}}{2(n+1)(n+2)}.$
	\par 
	If $n<7$, then $c_2=\bar{c}_1$ (see Figure (\ref{cr_graph_N2}(a))). The immediate basin $\mathcal{A}_\infty$ contains at least one critical point. Without loss of generality, let it be an $n$-th root of $c_1$. Then all other $n$-th roots of $c_1$ are also in $\mathcal{A}_\infty$ (by Equation~(\ref{sym_New})).
Since each $n$-th root of $c_2$ is conjugate to an $n$-th root of $c_1$, we have that $\mathcal{A}_\infty$ contains all the critical points of $N_2$.
	\par 
	If $n=7$, then $c_1=c_2=\frac{1}{6}$ (see Figure (\ref{cr_graph_N2}(b))) and the critical points are precisely the $n$-th roots of $\frac{1}{6}$. Each is with multiplicity two. As $\mathcal{A}_\infty$ contains at least one critical point, it contains all other critical points by Equation~(\ref{sym_New}).
    \begin{figure}[h!]
	\begin{subfigure}{.5\textwidth}
		\centering
		\includegraphics[width=1\linewidth]{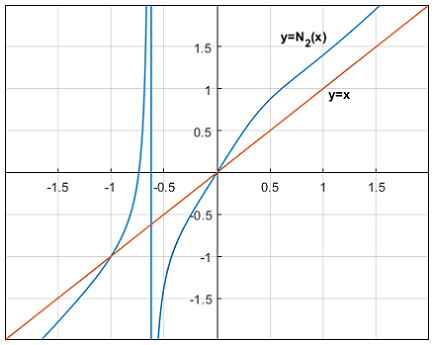}
		\caption{$n=3$: No positive critical point}
	\end{subfigure}%
	\begin{subfigure}{.5\textwidth}
		\centering
        \includegraphics[width=1\linewidth]{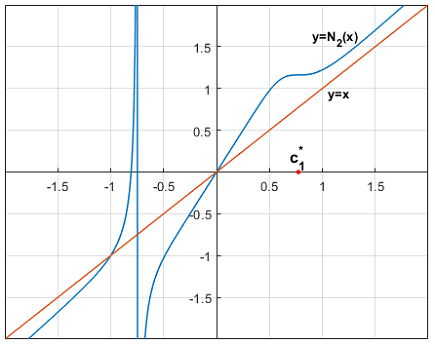}
		\caption{$n=7$: Exactly one positive critical point}
	\end{subfigure}\\[1ex]
	\centering
	\begin{subfigure}{0.5\textwidth}
		\centering
		\includegraphics[width=1\linewidth]{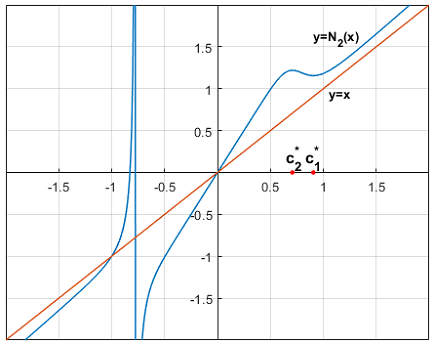}
		\caption{$n=9$: Two distinct positive critical points }
	\end{subfigure}
	\caption{The graph of $N_2$}
    \label{cr_graph_N2}
\end{figure}
	\par 
For $n>7$, Equation~(\ref{crit_N2}) can be written as \begin{equation}\label{Re_crit_N2}
        N_2'(z)=\frac{(n+1)(n+2)(z^n-c_2)(z^n-c_1)}{((n+1)z^n+1)^2}.
    \end{equation}
    Note that $0<c_2<c_1$. Let $c_1^*$ and $c_2^*$ be the positive real solutions of $z^n=c_1$ and $z^n=c_2$, respectively. Then, $0<c_2^*<c_1^*$ (see Figure (\ref{cr_graph_N2}(c))) and from Equation~(\ref{Re_crit_N2}), we get that $N_2$ is increasing in $(0,c_2^*)\cup (c_1^*,\infty)$ and it is decreasing in $(c_2^*, c_1^*)$. Therefore, $N_2(c_1^*)<N_2(c_2^*)$.
For all $x > c_1 ^*$,  $N_2 (x)>x$ and $N_2 '(x)>0$, and therefore $\lim\limits_{k \to \infty} N_2 ^{k}(x)=\infty $. As $N_2 (c_1 ^*)> c_1 ^*$,  we also have $\lim\limits_{k \to \infty} N_2 ^{k}(c_1 ^*)=\infty $. In other words, $[c_1^*, \infty)$ is contained in $\mathcal{A}_{\infty}$.
Now $N_2([c_2^*,c_1^*])=[N_2(c_1^*),N_2(c_2^*)] \subseteq (c_1^*,\infty)$, implies that $[c_2^*,c_1^*]\subset \mathcal{A}_{\infty}$.  It follows from Equation~(\ref{sym_New}) that  $\mathcal{A}_\infty$ contains all the critical points of $N_2$.
     
	\end{enumerate}
 The proof of the theorem is complete.
\end{proof}
\begin{rem}
	From the proof of the second case above, it is clear that $(0, \infty) \subset \mathcal{A}_\infty$. This is  because the minimum value of $N_1$ in $(0,c)$ is attained at $c$ and therefore, $N_1$ takes $(0,c)$ into $(c, \infty)$. 
\end{rem}
\section{Connected Julia sets}\label{Sec5_Conn_J_set}
This section discusses two classes of Newton maps with connected Julia sets.
\subsection{Newton maps with two attracting fixed points}
The length of a polynomial $q(z)=b_1 z^d +b_2 z^{d-1}\cdots +b_{d-1}z+b_d $, denoted by $L(q)$, is defined as $\sum_{i=1}^{d} |b_i|$. For polynomials with an attracting fixed point at the origin, there is always a disk around the origin contained in the  immediate basin of $0$. We estimate the radius of such a disk in terms of the multiplier of $0$ and the length of the polynomial.

\begin{lem}\label{disk_in_imm}
	Let $p$ be a polynomial of degree $d$ at least two with an attracting fixed point at the origin. Define a positive real number $r$ as
	\begin{equation*}
	r = \left\{\begin{array}{ccc}
	\frac{1- |p'(0)|}{L(p)-|p'(0)|} & \text {if } & L(p) \geq 1; \\
	\\
	\left(\frac{1- |p'(0)|}{L(p)-|p'(0)|}\right)^{\frac{1}{d-1}}& \text {if  } & L(p) <1,
	\end{array}\right.
	\end{equation*} where $(.)^{\frac{1}{d-1}}$ denotes the positive $(d-1)$-th root.  Then the immediate basin of the origin contains the disk $\{z: |z|< r\}$.
\end{lem}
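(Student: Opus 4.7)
The plan is to show that $p$ maps the closed disk $\overline{D}(0,r)$ into itself, and then deduce $D(0,r) \subseteq \mathcal{A}_0$ by standard Fatou theory. Since $0$ is a fixed point, $p$ has no constant term, so we may write $p(z) = \sum_{k=1}^{d} c_k z^k$ with $|p'(0)| = |c_1|$ and $L(p) = \sum_{k=1}^{d}|c_k|$. Note that $L(p) > |p'(0)|$ because $c_d \neq 0$ and $d \geq 2$, so the denominator $L(p) - |p'(0)|$ is strictly positive and $r$ is well-defined.

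The key estimate: for $|z| \leq r$ the triangle inequality gives
\[
|p(z)| \;\leq\; \sum_{k=1}^{d} |c_k|\, |z|^k \;\leq\; r \Bigl( |c_1| + \sum_{k=2}^d |c_k|\, r^{k-1} \Bigr),
\]
so it suffices to show the bracketed quantity is at most $1$. The argument splits into two cases, according to whether $r \leq 1$ or $r > 1$; these correspond exactly to $L(p) \geq 1$ and $L(p) < 1$ by the definition of $r$. In Case 1 ($r \leq 1$), I bound $r^{k-1} \leq r$ for $k \geq 2$, giving $\sum_{k=2}^d |c_k| r^{k-1} \leq r(L(p) - |c_1|)$; the required inequality $|c_1| + r(L(p) - |c_1|) \leq 1$ is then precisely the defining relation $r = (1-|p'(0)|)/(L(p) - |p'(0)|)$. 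In Case 2 ($r > 1$), I bound $r^{k-1} \leq r^{d-1}$ for $k \leq d$, giving $\sum_{k=2}^d |c_k| r^{k-1} \leq r^{d-1}(L(p) - |c_1|)$; the required inequality $|c_1| + r^{d-1}(L(p) - |c_1|) \leq 1$ is then the defining relation for $r$ in this regime. In both cases $|p(z)| \leq r$ on $\overline{D}(0,r)$.

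To conclude, the maximum modulus principle (together with non-constancy of $p$) upgrades this to $|p(z)| < r$ for $|z| < r$, so $p(D(0,r)) \subseteq D(0,r)$. Iterating yields a uniformly bounded family $\{p^n\}$ on $D(0,r)$, which is therefore normal; hence $D(0,r)$ is a connected subset of $\mathcal{F}(p)$. Since it contains the attracting fixed point $0$, it lies in the unique Fatou component of $0$, which by definition is the immediate basin $\mathcal{A}_0$.

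The computation is essentially bookkeeping, and the only genuine subtlety is getting the opposite monotonicity bounds right in the two regimes of $r$: the estimate $r^{k-1} \leq r$ fails when $r > 1$ while $r^{k-1} \leq r^{d-1}$ is wasteful when $r < 1$. This dichotomy is exactly what forces the two-case formula for $r$ in the statement, and is the only place where the structure of the proof is non-routine.
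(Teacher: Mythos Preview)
Your proof is correct and follows essentially the same route as the paper: write $p(z)=\sum_{k\ge 1} c_k z^k$, apply the triangle inequality, and split into the two regimes $r\le 1$ and $r>1$ using the opposite monotonicity bounds $r^{k-1}\le r$ versus $r^{k-1}\le r^{d-1}$. The only cosmetic difference is in the concluding step: the paper bounds $|p(z)/z|$ and obtains the slightly sharper $|p(z)|<|z|$ on the punctured disk (so $p^n(z)\to 0$ directly), whereas you bound $|p(z)|\le r$ on $\overline{D}(0,r)$ and finish via invariance, normality, and the identification of the Fatou component of $0$ with $\mathcal{A}_0$; both arguments are valid and yield the same conclusion.
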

\begin{proof}
Let $p(z)=a_1 z^d +\cdots +a_{d-1}z^2+a_d z$. Then $L(p)=\sum_{i=1}^{d} |a_i|$, and
$$
\left|\frac{p(z)}{z}\right| =  \left|a_1 z^{d-1}+\cdots+a_{d-1}z+a_d\right| \\
\leq \left|a_1\right||z|^{d-1}+\cdots+\left|a_{d-1}\right||z|+|a_d|.
$$
\par If $L(p) \geq 1$, then $r \leq 1$ and $r^{m} \leq r$ for all $m=1,2,3, \dots, (d-1)$. Now, for $0<|z|<r$ we have,
$$\left|\frac{p(z)}{z}\right| < r \left( L(p) - |p'(0)| \right)  +|p'(0)| 
 =1.$$
Thus, $|p(z)|<|z|$ for all $z$ with $ 0< |z|<r$.
	
	\par 
	If $L(p) < 1$, then $r>1$ and $r^{m} \leq  r^{d-1}$ for $m =1,2,3,\dots, (d-1)$. For $1<|z|<r$, we have  
$$	\left|\frac{p(z)}{z}\right|  <r^{d-1}(L(p)-|p'(0)|)+|p'(0)|
 =1.
$$
	Thus, $|p(z)|<|z|$ whenever $1<|z|<r$. Now if $0<|z| \leq 1$, then we have, 
$$ \left|\frac{p(z)}{z}\right|=  \left|a_1 z^{d-1}+\cdots+a_{d-1}z+a_d\right|  \leq L(p) < 1.
$$
It follows from Schwarz Lemma that $p^n (z) \to 0$ as $n \to \infty$ for every
$|z|<r$ irrespective of whether $L(p) \geq 1$ or $L(p) <1$. Here $r$ is as defined in the statement of this lemma and depends on $L(p)$.	Hence, we conclude that $\{z: |z|<r\}$ is contained in the immediate basin of the origin.
\end{proof}

\begin{proof}[Proof of Theorem \ref{connected-JS}]
If a Newton map $N$ has exactly two attracting fixed points, one of which is an exceptional point, then it follows from Theorem~\ref{two-attracting-fixedponts} that $N=N_R$ where $R(z)=\frac{z^d}{p(z)}$ for some $d \geq 1$, some monic polynomial $p$ with degree $d$ and $p(0) \neq 0$. 
\par If all the repelling fixed points of $N_R$ are with multiplier $2$, then all the poles of $R$ are simple, i.e., $p$ is generic. It follows from Theorem~\ref{two-attracting-fixedponts}(1) that $N_R$ is conjugate to  $F(z)=\frac{z}{d}(z^d+d-1)$. The Fatou set of $F$ is invariant under rotations $z \mapsto \lambda z$ with $\lambda^d =1$, by Lemma~\ref{symmetry}. This gives that the immediate basin $\mathcal{A}_0$ of the origin is also invariant under these rotations. The finite critical points of $N_R$ are the solutions of $z^d=-\frac{d-1}{d+1}$, and these are preserved under the aforementioned rotations. Since at least one of these critical points is in $\mathcal{A}_0$, all of them are in $\mathcal{A}_0$, and the Julia set of $F$ is connected. 

\par The multiplier of the non-exceptional attracting fixed point of $N_R$ is $\frac{d-1}{d}$.  If it is at most $\frac{4}{5}$, then $d \leq 5$. In other words, the degree of $p$ is at most $5$. 
There are two possibilities depending on the values of $d$.
\begin{enumerate}
 	\item If $d=2$ or $3$, then $N_R$ is quadratic or cubic. That the Julia set of $N_R$ is connected follows from Theorem 1.1 and Theorem 1.2, \cite{Nayak_Pal2022}.
	\item  If $d=4$ or $5$, then the degree of $N_R$ is at most six. Unless $p$ is generic or the degree of $N_R$ is two or three, $N_R$ is conjugate to $F_i$ for some $ i =1,2,3,4,5$. This can be seen from Table~\ref{d=4} for $F_1$ and from Table~\ref{d=5} for $F_i, i=2,3,4,5$.
	{\begin{enumerate}
		\item  $F_1(z) = \frac{1}{12}z(z^3+z^2+z+9)$,
		\item $ F_2 (z)=\frac{1}{20}z(z^4+z^3+z^2+z+16)$,
		\item $ F_3 (z)=\frac{z}{10(\sqrt{5}- 7i)}(-9 i z^3+(3 \sqrt{5}-3i)z^2 +(-\sqrt{5}- 2i)z+ 8\sqrt{5}- 56i)$,
	 	\item $	F_4 (z)=\frac{z}{10(\sqrt{5}+ 7i)}(9 i z^3+(3 \sqrt{5}+ 3 i)z^2 +(-\sqrt{5}+ 2i)z+8\sqrt{5}+56 i) $, and
		\item $F_5 (z)= \frac{1}{30}z(z^3+2z^2+3z+24)$.
	\end{enumerate} }

Since $L(F_i)=1$ for $i=1,2,5$, the immediate basin of the origin contains $\{z: |z|<1\}$ in each of these cases by Lemma~\ref{disk_in_imm}. As $L(F_3), L(F_4) >1$, it follows from the same lemma that $\{z: |z|< \frac{2(2 \sqrt{6}-3)}{5}\}$ is contained in the immediate basin of the origin for $F_3$ and $F_4$. We refer to these disks as internal disks and denote them as $\mathbb{D}_i$, where for $i=1,2,5$, $\mathbb{D}_i =\{z: |z|<1 \}$ and for $i=3,4$, $\mathbb{D}_{i}=\{z:|z|< \frac{2(2 \sqrt{6}-3)}{5} \}$. The Julia set of $F_i$ is connected if all the finite critical points of $F_i$ are in the basin of the origin, by Theorem 9.5.1, \cite{Beardon_book}. Thus, we need to show that,
 all the finite critical points or their iterated images are in the respective internal disks for each $i$.

\par 
 We first consider $F_1(z)=\frac{1}{12}z(z^3+z^2+z+9)$. Its real fixed points are $0,1$ and other finite fixed points are the solutions of $z^2+2z+3=0$, which are non-real. In particular, $F_1 (x) > x$ for all $x<0$. There are three finite critical points, exactly one of which, say $c_r$ is real. It is seen that $-2 <c_r <-1$. There is a unique non-zero real root of $F_1$, and that is in $(-3, -2)$. Since $F_1$ is of even degree and has positive leading coefficient, it attains its minimum value at $c_r$ on the real axis, and $F_1$ is increasing in $(c_r,0)$ (see Figure~(\ref{F_graph}(a))). Moreover, $F_1([c_r,0])\subsetneq  (c_r,0]$ and $F_1 ^n (c_r) \to 0$ as $n \to \infty$. This gives that $F_{1}^k (c_r) \in \mathbb{D}_1$ for some $k \geq 1$. The images of the other two non-real critical points of $F_1$ are found to be in $\mathbb{D}_1 $ (see Table~\ref{cr.pt_cr.val}).
 \begin{figure}[h!]
 \begin{subfigure}{.5\textwidth}
 \centering
 \includegraphics[width=0.85\linewidth]{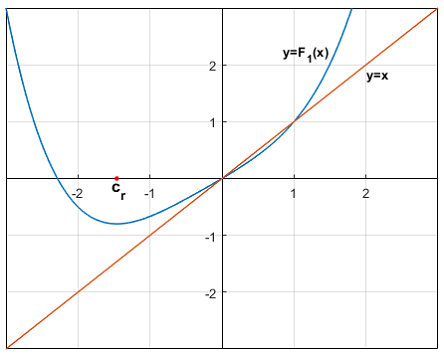}
 \caption{ $F_1 (z) =\frac{1}{12}z(z^3+z^2+z+9) $}
 \end{subfigure}
\hspace*{-1.5cm}
 \begin{subfigure}{.5\textwidth}
\centering
\includegraphics[width=0.85\linewidth]{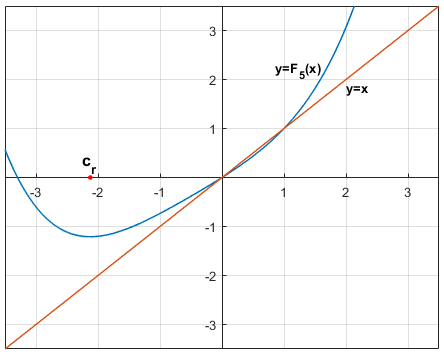}
 \caption{ $F_5(z) = \frac{1}{30}z(z^3+2z^2+3z+24)$}
\end{subfigure}
\caption{Graphs of $F_1$ and $F_2$}
\label{F_graph}
\end{figure}

All the finite critical points of $F_2, F_3$, and $F_4 $ are non-real, and the critical values are in the respective internal disks (see Table~\ref{cr.pt_cr.val}).
\par 
For $F_5(z)=\frac{1}{30}z(z^3+2z^2+3z+24)$, there is exactly one real critical point. Following the same argument as for $F_1$, it is seen that an iterated image of the real critical point is in $\mathbb{D}_5$ (see Figure~(\ref{F_graph}(b))). 
\end{enumerate}	
\end{proof}
\begin{figure}[h!]
	\begin{subfigure}{.5\textwidth}
		\centering
		\includegraphics[width=0.97\linewidth]{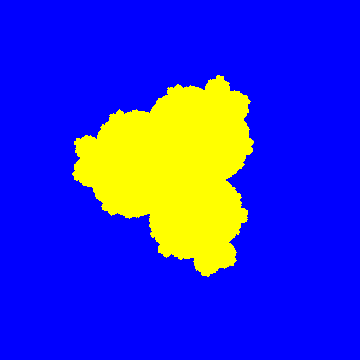}
		\caption{$ F_4$}
	\end{subfigure}%
	\begin{subfigure}{.5\textwidth}
		\centering
		\includegraphics[width=0.97\linewidth]{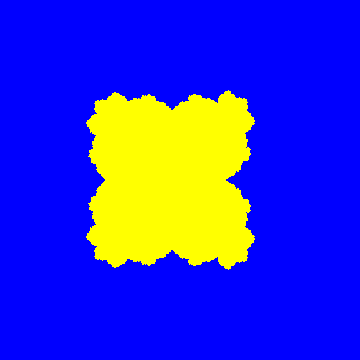}
		\caption{$F_2$}
	\end{subfigure}
	\caption{The  Julia set of $F_4 (z)=\frac{z(9 i z^3+(3 \sqrt{5}+ 3i)z^2+(-\sqrt{5}+ 2i)z+8\sqrt{5}+ 56i)}{10(\sqrt{5}+ 7i)}$ and $F_2 (z)=  \frac{z}{20} (z^4 +z^3+z^2+z+16)$ are shown as the boundary of the yellow and the blue regions in each image.}
	\label{JFset}
\end{figure}
 \newpage
\begin{table}[]
	\centering
	\renewcommand{\arraystretch}{1.5}
	\begin{tabular}{|p{0.6cm}|p{5.5cm}| p{2.0cm}| p{2.0cm}| p{1.5cm}| p{1.4cm}|}
		\hline
		\centering Sl. No. &\centering Newton maps & \centering Critical point ($c$) & \centering Critical value ($c^*$) & \centering $|c^*|$ &  ~~~$r$\\
		\hline
		\hline
		\multirow{3}{10em}{1} & \multirow{3}{10em}{$F_1 (z)=\frac{1}{12}z(z^3+z^2+z+9)$} &$0.355697-1.18874 i$ & $0.115994- 0.678307 i$ & $0.688153$ & \multirow{3}{2em}{\centering$1$}  \\
		\cline{3-5}
		&  & $ 0.355697+ 1.18874 i $ & $ 0.115994+ 0.678307 i $ & 0.688153 &    \\
		\hline
		\multirow{7}{2em}{2} & \multirow{7}{10em}{$F_2 (z)=\frac{1}{20}z(z^4+z^3+z^2+z+16)$} & $ 0.692438- 1.01941 i $ & $ 0.373036- 0.68711 i $ & $ 0.781841 $ & \multirow{7}{2em}{\centering$1$} \\
		\cline{3-5}
		&  & $ 0.692438+ 1.01941 i $ & $ 0.373036- 0.68711 i $ & $ 0.781841 $ &   \\
		\cline{3-5}
		&  & $ -1.09244- 0.955874 i $ & $ -0.69979- 0.5884 i $ & $ 0.914285 $ &   \\
		\cline{3-5}
		&  & $-1.09244+ 0.955874 i$ &$-0.69979+ 0.5884i$ & $0.914285$ &   \\
		\hline
		\multirow{5}{2em}{3} & \multirow{5}{15em}{$F_3 (z)=\frac{z}{10(\sqrt{5}- 7i)}(-9 i z^3+(3 \sqrt{5}-3i)z^2$ $+(-\sqrt{5}- 2i)z+8\sqrt{5}- 56i)$ } & $ 0.426365+ 0.953382 i $ & $ 0.253282+ 0.566356 i $ & $ 0.620412 $ &   \\
		\cline{3-5} 
		&  & $ 0.51694- 1.13864 i $ & $0.237926- 0.699289 i$ & $ 0.73866 $ & \multirow{2}{2em}{\centering $\frac{2(2\sqrt{6}-3)}{5} $}  \\
		\cline{3-5}
		&  & $ -1.19327- 0.373795 i $ & $ -0.67984- 0.28885 i $ & $ 0.73866 $ & $\approx 0.75$  \\
		\hline
		\multirow{5}{2em}{4}  & \multirow{5}{15em}{$F_4 (z)=\frac{z}{10(\sqrt{5}+ 7i)}(9 i z^3+(3 \sqrt{5}+3i)z^2$ $+(-\sqrt{5}+ 2i)z+8\sqrt{5}+ 56i)$}& $ 0.426365- 0.953382 i $ & $ 0.253282- 0.566356 i $ & $ 0.620412 $ &   \\
		\cline{3-5}
		&  & $ 0.51694+ 1.13864 i $ & $0.237926+ 0.699289 i$ & 0.73866 &  \multirow{2}{2em}{\centering $\frac{2(2\sqrt{6}-3)}{5}$}  \\
		\cline{3-5}
		&  & $ -1.19327+ 0.373795 i $ &$ -0.67984+0.28885 i $ & $ 0.73866 $ & $\approx 0.75$   \\
		\hline
		\multirow{3}{10em}{5} & \multirow{3}{10em}{ $F_5 (z)=\frac{1}{30}z(z^3+2z^2+3z+24)$}  & $ 0.311937+ 1.65158 i $ & $ 0.01360+ 0.975419 i $ & $ 0.97551 $ & \multirow{3}{2em}{\centering $1$}  \\
		\cline{3-5}
		&  & $ 0.311937- 1.65158 i $ &$ 0.01360+ 0.97542 i $ & $ 0.97551 $ & \\
		\hline
	\end{tabular}
	\caption{Critical points and radii of internal disks: A non-real critical point of $F_i$ and the corresponding  critical value are denoted by $c$ and $c^*$, respectively. The real number $r$ is the radius of the internal disk $\mathbb{D}_r$. Table~\ref{cr.pt_cr.val} shows a comparison between $|c^*|$ and $r$, which gives that each such critical value is in the immediate basin of the origin.}
	\label{cr.pt_cr.val}
\end{table}
By a Julia point, we mean a point contained in the Julia set. For a rational function $R$, the postcritical set is the union of forward orbits of all the critical points of $R$. If the closure of the postcritical set contains only finitely many Julia points, then $R$ is said to be a geometrically finite map. It is known that if $R$ is geometrically finite with a connected Julia set, then $\mathcal{J}(R)$ is locally connected (Theorem A, \cite{Lei1996}). There is a remark on Theorem~\ref{connected-JS}.
\begin{rem}
	\begin{enumerate}
		\item It follows from the proof of Theorem~\ref{connected-JS} that all the Newton maps are geometrically finite with connected Julia sets. Hence, their Julia sets are locally connected (see Figure \ref{JFset} for example).
\item If $p$ is generic, then  $\mathcal{A}_0$ is simply connected and contains $d$ many critical points counting multiplicity. Therefore, it is completely invariant under the concerned Newton map by the Riemann-Hurwitz formula. Thus, the Fatou set consists of two completely invariant domains $\mathcal{A}_0$ and $\mathcal{A}_\infty$, and  $\mathcal{J}(F)$ is a Jordan curve. 
\end{enumerate}
 \end{rem}
\subsection{Newton's method applied to McMullen maps}
The McMullen map $f_\lambda(z)$ is defined as
$$f_\lambda(z)=z^m-\frac{\lambda}{z^n}=\frac{z^{m+n}-\lambda}{z^n}$$
where $\lambda \in \mathbb{C} \setminus \{0\}$ and $m,n \geq 1$. 

\begin{lem}\label{Conj_N_Mc}
	Let $f_\lambda(z)=z^m-\frac{\lambda}{z^n}$ for  $\lambda \neq 0$ and $m,n \geq 1$. Then the Newton map $N_{f_\lambda}$ is conjugate to $N_{f_1}$.
\end{lem}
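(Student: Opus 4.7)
The plan is to invoke the Scaling property (Lemma~\ref{scaling}) directly, after finding an affine map that sends $f$ to a scalar multiple of $f_\lambda$ (or vice versa). I would first rewrite $f(z) = z^m - \frac{1}{z^n}$ so that its structural similarity to $f_\lambda(z) = z^m - \frac{\lambda}{z^n}$ becomes transparent.

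Next, I would look for $T(z) = \alpha z$ (no translation is needed because both $f$ and $f_\lambda$ are odd-like with pole only at $0$) such that $f_\lambda \circ T$ is a scalar multiple of $f$. Computing, one gets
\begin{equation*}
f_\lambda(\alpha z) = \alpha^m z^m - \frac{\lambda}{\alpha^n z^n},
\end{equation*}
so choosing $\alpha$ with $\alpha^{m+n} = \lambda$ and setting $\mu = \alpha^m$, the two terms have a common factor $\mu$, giving $f_\lambda(\alpha z) = \mu f(z)$. Equivalently, $f(z) = \mu^{-1} f_\lambda(T(z))$, so with $R = f_\lambda$ and $S = f$ the hypothesis of Lemma~\ref{scaling} is met with constant $\mu^{-1}$ and affine map $T$, yielding $T \circ N_f \circ T^{-1} = N_{f_\lambda}$.

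There is no real obstacle here: the only thing to verify is that $\alpha \neq 0$ (guaranteed since $\lambda \neq 0$) so that $T$ is a genuine Möbius map, and that the degree of $f_\lambda$ is at least two so that $N_{f_\lambda}$ is a non-degenerate Newton map (which is automatic as $m, n \geq 1$). Hence the conjugacy reduces the study of the family $\{N_{f_\lambda}\}_{\lambda \neq 0}$ to the single map $N_f$, setting up the proofs of Theorems~\ref{New_Mc} and~\ref{McM_sym}.
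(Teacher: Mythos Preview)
Your proof is correct and essentially identical to the paper's: both choose the linear map $T(z)=\alpha z$ with $\alpha^{m+n}=\lambda$ (the paper writes $\alpha=\lambda^{1/(m+n)}$) and the scalar $\mu^{-1}=\alpha^{-m}$ (the paper writes $c=\lambda^{-m/(m+n)}$), then apply the Scaling property to obtain $T\circ N_f\circ T^{-1}=N_{f_\lambda}$. The only cosmetic difference is that you expand $f$ as $z^m-1/z^n$ while the paper keeps it as $(z^{m+n}-1)/z^n$.
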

\begin{proof}
	Consider the affine map $T(z)=\lambda^{\frac{1}{m+n}} z$, and $c=\lambda^{-\frac{m}{m+n}}$. Then we have
	$$	 cf_\lambda\left(T(z)\right)=c\frac{\lambda\left(z^{m+n}-1\right)}{\lambda^{\frac{n}{m+n}} z^n} = \frac{z^{m+n}-1}{z^n}=f_1(z).
	$$
	By the Scaling property, we have $N_{f_1}=T^{-1}\circ N_{f_\lambda}\circ T$. 
\end{proof}
Now onwards, let $f_1 (z)=f(z)$ for the sake of a simpler notation.
To study the dynamics of the Newton's method applied to a McMullen map $f_\lambda$, it is enough to consider   $f(z)=\frac{z^{m+n}-1}{z^n}$.
 Note that $f'(z)= \frac{m z^{m+n}+n}{z^{n+1}}$ and,
\begin{eqnarray}
N_f(z)&&= z-\frac{z\left(z^{m+n}-1\right)}{m z^{m+n}+n} \label{N1}\\
&& =\frac{z\left((m-1) z^{m+n}+(n+1)\right)}{m z^{m+n}+n}, \label{N2} 
\end{eqnarray}
and
\begin{eqnarray}
N_f^{\prime}(z)    =\frac{( z^{m+n}-1)(m(m-1) z^{m+n}-n(n+1))}{(m z^{m+n}+n)^2} \label{N'}.
\end{eqnarray} 
If $m=1$ then $f(z)=\frac{z^{n+1}-1}{z^n}$ and $S(z)=\frac{1}{N_f(\frac{1}{z})}=\frac{nz^{n+1}+1}{(n+1)z^n}$, which is nothing but  $N_{z^{n+1}-1}$. Therefore, $\mathcal{J}(N_f)$ is connected by Theorem~\ref{shishikura} (see Figure~(\ref{m1n2}(a))).
\begin{figure}[h!]
	\begin{subfigure}{.5\textwidth}
		\centering
		\includegraphics[width=0.98\linewidth]{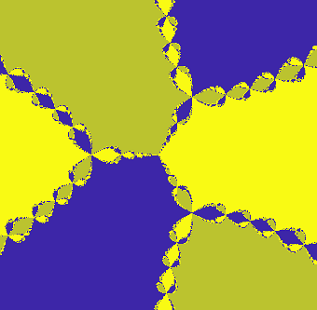}
		\caption{$N_f$, where $f(z)=z-\frac{1}{z^2}$}
	\end{subfigure}
	\begin{subfigure}{.5\textwidth}
		\centering
		\includegraphics[width=0.98\linewidth]{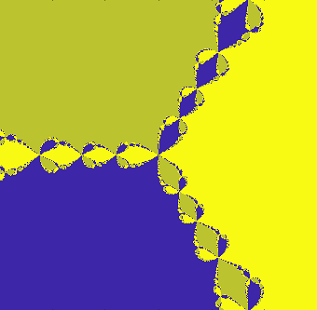}
		\caption{$N_p$, where $p(z)=z^3-1$}
	\end{subfigure}
	\caption{The Fatou set of the  Newton's method of McMullen maps for $m=1$}
	\label{m1n2}
\end{figure}
\par  
For $m\geq 2$, we have the following observations. 
\begin{obs}\label{prop_N_Mc}
The degree of $N_f$ is $m+n+1$. It is easy to observe the following.
	\begin{enumerate}
		\item Fixed points: The solutions of $z^{m+n}=1$ are superattracting fixed points of $N_f$. In fact, these are simple critical points of $N_f$. In particular, $1$ is a superattracting fixed point for $N_f$ for each $m,n$. The origin and the point at $\infty$ are repelling fixed points of $N_f$ with multipliers $\frac{n+1}{n}$ and $\frac{m}{m-1}$, respectively.
		\item Poles:  Each solution of $z^{m+n}=-\frac{n}{m}$ is a pole of $N_f$. There is a negative pole if $m+n$ is odd (see Figure~(\ref{Plot_N_f}(a))). As $\infty$ is in $\mathcal{J}(N_f)$, each pole is in $\mathcal{J}(N_f)$.
		\item Free critical points: We call a critical point of $N_f$ free if it is not a root of $f$. These are precisely the solutions of  $m(m-1) z^{m+n}-n(n+1)=0$, i.e., $z^{m+n}=\frac{n(n+1)}{m(m-1)}$.
		\item  On the real axis: 
		It follows from Equation~(\ref{N1}) that $N_f(x)-x=-\frac{x\left(x^{m+n}-1\right)}{m x^{m+n}+n}$. Therefore, we have $N_f(x)>x$ for $x \in (0,1)$ and $N_f(x)<x$ for $x \in (1,\infty)$ (see Figure (\ref{Plot_N_f})).
		\item \label{obs-symmetry} Symmetry in dynamics: As $N_f(\lambda z)=\lambda N_f(z)$ for $\lambda^{m+n}=1$, the set of rotations $\left\{z \mapsto \lambda z: \lambda^{m+n}=1\right\}$ is contained in $  \Sigma N_f$, by Lemma~\ref{symmetry}. This also gives that a Fatou component of $N_f$ is mapped onto a Fatou component under each rotation $z \mapsto \lambda z$ with $\lambda^{m+n}=1$.	\end{enumerate}
\end{obs}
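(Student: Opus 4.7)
The plan is to verify each of the six items in turn via direct algebraic manipulation of the explicit formulas \eqref{N1}, \eqref{N2}, \eqref{N'}, combined with the general structural results already in hand: Lemma~\ref{NFP}, the degree formula \eqref{deg_N_R}, and Lemma~\ref{symmetry}.

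For the degree, $f=(z^{m+n}-1)/z^n$ has numerator degree $d=m+n$, denominator degree $e=n$, with $m+n$ distinct roots and a single pole at the origin; since $m\geq 2$ forces $d\neq e+1$, the formula \eqref{deg_N_R} gives $\deg(N_f)=(m+n)+1$. For the fixed points in item (1), each $(m+n)$-th root of unity is a simple root of $f$ and hence a superattracting fixed point by Lemma~\ref{NFP}(1); formula \eqref{N'} shows that each such point is a simple critical point because $z^{m+n}-1$ appears as a simple factor in the numerator of $N_f'$. The origin, a pole of $f$ of multiplicity $n$, is repelling with multiplier $(n+1)/n$ by Lemma~\ref{NFP}(2); and since $d-e=m\neq 1$, Lemma~\ref{NFP}(3) makes $\infty$ a fixed point with multiplier $m/(m-1)>1$. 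For item (2), the denominator in \eqref{N2} vanishes precisely on the $(m+n)$-th roots of $-n/m$; substituting $z^{m+n}=-n/m$ into the numerator gives $z\bigl((m-1)(-n/m)+(n+1)\bigr)=z(m+n)/m$, which is non-zero for $z\neq 0$, so these are genuine poles of $N_f$, and there is a negative real pole exactly when $m+n$ is odd. Backward invariance of $\mathcal{J}(N_f)$ together with $\infty\in\mathcal{J}(N_f)$ then places every pole in the Julia set.

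Item (3) is a direct read-off from \eqref{N'}: the numerator factors as $(z^{m+n}-1)\bigl(m(m-1)z^{m+n}-n(n+1)\bigr)$, so the critical points not already accounted for as roots of $f$ are exactly the solutions of $z^{m+n}=n(n+1)/(m(m-1))$. Item (4) is a sign analysis in \eqref{N1}: on $(0,1)$ one has $x>0$, $x^{m+n}-1<0$, $mx^{m+n}+n>0$, whence $N_f(x)-x>0$, and the analogous check reverses the sign on $(1,\infty)$. Item (5) is the substitution $z\mapsto \lambda z$ with $\lambda^{m+n}=1$ in \eqref{N2}: every occurrence of $z^{m+n}$ is unchanged and the leading factor of $z$ picks up a $\lambda$, so $N_f(\lambda z)=\lambda N_f(z)$; Lemma~\ref{symmetry} then puts $z\mapsto\lambda z$ into $\Sigma N_f$, and continuity of the rotation permutes Fatou components.

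There is no serious obstacle here—the statement is an explicit collection of routine checks—but the subtlest bookkeeping points are two. First, in handling $\infty$ one must notice that the hypothesis $m\geq 2$ is precisely what excludes the $d=e+1$ case in \eqref{deg_N_R}, so that $\infty$ is a genuine fixed point with multiplier $m/(m-1)$, in contrast to the $m=1$ situation treated separately immediately before the observation. Second, in item (2) the non-cancellation check between the numerator and denominator of \eqref{N2} at $z^{m+n}=-n/m$ is what distinguishes genuine poles from removable singularities and is easy to overlook.
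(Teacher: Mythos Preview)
Your proposal is correct and follows essentially the same approach the paper has in mind: the paper states this as an Observation with the justifications built into the statement itself, relying on exactly the tools you invoke (the explicit formulas \eqref{N1}, \eqref{N2}, \eqref{N'}, Lemma~\ref{NFP}, the degree formula \eqref{deg_N_R}, and Lemma~\ref{symmetry}). Your write-up simply makes explicit the routine checks the paper leaves to the reader, and your flagged subtleties (the role of $m\geq 2$ for $\infty$, and the non-cancellation at $z^{m+n}=-n/m$) are precisely the small points worth noting.
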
 
We are now in a position to present the proof of Theorem \ref{New_Mc}.
\begin{proof}[Proof of Theorem \ref{New_Mc}]
	As $N_f$ is conjugate to the Newton's method applied to a polynomial whenever $m=1$, the Julia set $\mathcal{J}(N_f)$ is connected. Assume that $m\geq 2$. Let the attracting fixed points, i.e., the solutions of the equation $z^{m+n}=1$, be denoted by $z_i$ where $i=1,2, \ldots, m+n$ and we assume $z_1=1$. As $\frac{n(n+1)}{m(m-1)}>0$, there is a positive $c$ such that  $c^{m+n} =\frac{n(n+1)}{m(m-1)} $. This $c$ is a free critical point of $N_f$. 
	There are three cases depending on the values of $m+n$.
	
	\par
 Case I: Let $m>n+1$. Then $c<1$. The map $N_f$ is strictly increasing in $[1,\infty)$, and from Observation \ref{prop_N_Mc}(4), we get that $\lim\limits_{n\to \infty} N_f^n(x)=1$ for all $x\in [1,\infty)$. Thus, $[1, \infty)\subset \mathcal{A}_1$, where $\mathcal{A}_1$ denotes the immediate attracting basin of $1$. Hence, $\mathcal{A}_1$ is unbounded. Again, the symmetries in $\mathcal{J}(N_f)$ (see Observation \ref{prop_N_Mc}(5)) give that all the immediate basins are unbounded. Now 
	\begin{eqnarray}
	N_f(c)  =c\left(1+\frac{m-n-1}{m n}\right). \label{cr}
	\end{eqnarray}
	Thus, if $m > n+1$, then $\frac{m-n-1}{m n}> 0$, i.e.,
	$c^*=N_f(c) > c$. From Equation ~(\ref{N'}), we have
	\begin{equation}\label{N'c}
	N_f^{\prime}(z)=\frac{m(m-1)\left(z^{m+n}-1\right)\left(z^{m+n}-c^{m+n}\right)}{\left(m z^{m+n}+n\right)^2},
	\end{equation}
	and 
	$$
	N_f'(x)\left\{\begin{array}{ccc}
	>0 & \text { whenever }& 0<x<c \\
	<0 & \text { whenever } & c<x<1 \\
	>0 & \text { whenever } & x>1.
	\end{array}\right.
	$$
	\begin{figure}[h!]
		\begin{subfigure}{.5\textwidth}
			\centering
			\includegraphics[width=1.0\linewidth]{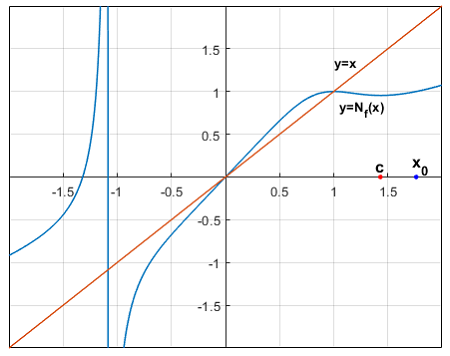}
			\caption{$m=2,~n=3$}
		\end{subfigure}
		\begin{subfigure}{.5\textwidth}
			\centering
			\includegraphics[width=1.0\linewidth]{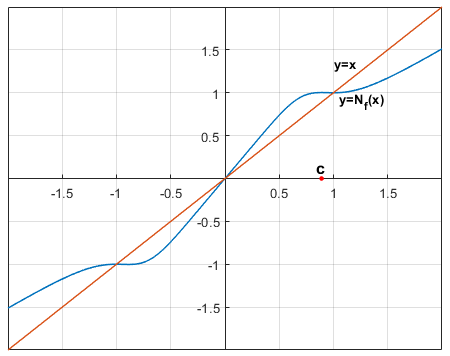}
			\caption{$m=4,~n=2$}
		\end{subfigure}
		\caption{The graph of $N_f (z)=\frac{z((m-1)z^{m+n}+ n+1)}{m z^{m+n}+n}$}
		\label{Plot_N_f}
	\end{figure}
	In the interval $(c, 1)$, $N_f$ is decreasing (see Figure (\ref{Plot_N_f}(b))), which gives that $N_f((c, 1))=(1, c^*)$. This implies that $c \in \mathcal{A}_1$.
	\par 
	The map $N_f$ is increasing in $(0,c)$, giving that $\{N_f ^{n}(x)\}_{n>0}$ is an increasing sequence for each $0< x< c$. If this remains bounded above by $c$, then it must converge and converge to a fixed point lying in $(0,c)$. However, this is not possible as there is no fixed point in $(0,c)$. Therefore, for each $0<x<c$, there is a natural number $n_x$ such that $N_f ^{n_x}(x)>c$. In other words,  $(0,c )\subset \mathcal{A}_1$. Therefore, the positive real axis is contained in $\mathcal{A}_1$. The point $0$ is a repelling fixed point and is in $ \partial \mathcal{A}_1$.
	\par  It follows from Observation~\ref{prop_N_Mc}(5) that each root of $z^{m+n} =c^{m+n}$ is contained in an immediate basin corresponding to an $(m+n)$-th root of unity. Thus, all the free critical points of $N_f$ are in $\bigcup_{i=1}^{m+n}\mathcal{A}_{z_i}$. 
	
	\par As all the immediate basins are unbounded, $0$ cannot be in a bounded component of the Julia set. Thus, both the repelling fixed points of $N_f$ are on the same Julia component. By Theorem \ref{shishikura}, we conclude that the Julia set of $N_f$ is connected.\\
 Case II: 
	If $m=n+1$, then $N_f'(z)=\frac{n(n+1)(z^{2 n+1}-1)^2}{((n+1) z^{2n+1}+n)^2}$ and there are no free critical points. It follows from Theorem~9.3, \cite{Milnor_book} that $\mathcal{J}(N_f)$ is connected.
	\\ {Case III:}
	Now consider $m < n+1$. Then there is a positive free critical point, say $c$ and $c>1$. It follows from Equation~(\ref{N'c}) that
	$$
	N_f'(x)\left\{\begin{array}{ccc}
	>0 & \text { whenever }& 0<x<1 \\
	<0 & \text { whenever } & 1<x<c \\
	>0 & \text { whenever } & x>c.
	\end{array}\right.
	$$
	
	Since  $N_f$ is increasing in $(0,1)$ (see Figure (\ref{Plot_N_f}(a))), $\{N_f ^{n}(x)\}_{n>0}$ is an increasing sequence that is bounded above by $1$ for each $x \in (0,1)$.  Therefore,
	$ \lim\limits_{n \rightarrow \infty} N_f^n(x)=1$ whenever $x \in(0,1)$.
	Now $N_f (c)=c^* <1$ and there is an $x_0 >1$ such that $(1, x_0)$ is mapped onto $(c^*, 1)$. Thus, the interval $(0, x_0) \subset \mathcal{A}_1$. For each $x >x_0$, it is seen that $\{N_f ^{n}(x)\}_{n>0}$ is a strictly decreasing sequence that is bounded below by $1$. It must converge and converge to a fixed point. Since $1$ is the only fixed point on the positive real line, $ \lim\limits_{n \rightarrow \infty} N_f^n(x)=1$ whenever $x > x_0$. The immediate basin $\mathcal{A}_1$ contains the positive free critical point $c$. The rest follows in a similar way as in Case I of this proof.
\end{proof}
Figure (\ref{J_set_N_f}) illustrates the Fatou and Julia set of $N_f$ whenever $m=2, ~n=3$ (Figure (\ref{J_set_N_f}(a))) and $m=4, ~n=2$ (Figure (\ref{J_set_N_f}(b))). The different colors represent different basins. The boundary of any two different colors is in the Julia set.
\begin{figure}[h!]
	\begin{subfigure}{.5\textwidth}
		\centering
		\includegraphics[width=0.98\linewidth]{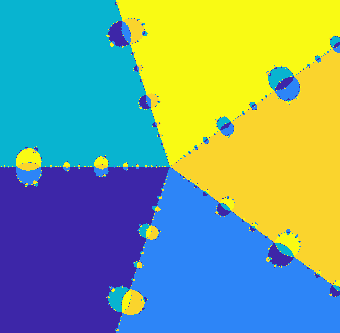}
		\caption{$m=2,~n=3$}
	\end{subfigure}
	\begin{subfigure}{.5\textwidth}
		\centering
		\includegraphics[width=0.98\linewidth]{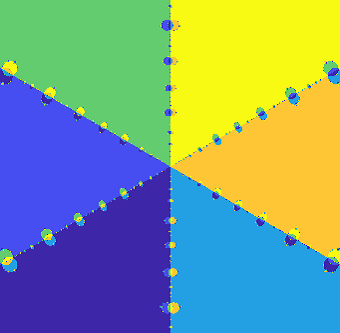}
		\caption{$m=4,~n=2$}
	\end{subfigure}
	\caption{The Fatou set of $N_f (z)=\frac{z((m-1)z^{m+n}+ n+1)}{m z^{m+n}+n}$}
	\label{J_set_N_f}
\end{figure}
Therefore, $\mathcal{F}\left(N_f\right)=\bigcup_{i=1}^{m+n} B_i$ where $B_i$ is the basin of $z_i$.
\begin{rem}
	For $m=1$, we denote the Newton map as $N_{1,n}$. 
	\begin{enumerate}
		\item Although the point at $\infty$ is not a fixed point of $N_{1,n}$, it is a pre-periodic point and is in the Julia set of $N_{1,n}$ as $N_{1,n}(\infty)=0$ and $0$ is a repelling fixed point.
		\item If $n=1$, then  $N_{1,1}$ is conjugate to   $N_{z^2-1}$, whose Julia set is the imaginary axis. Since the conjugating map here is $z \mapsto \frac{1}{z}$, we have that $\mathcal{J}(N_{1,1})$ is the imaginary axis.
	\end{enumerate} 
\label{newton-line}
\end{rem}
\begin{rem}
	It follows from the proof of Theorem \ref{New_Mc} that $N_f$ is a geometrically finite map with connected Julia set. Thus, $\mathcal{J}(N_f)$ is locally connected. Moreover, for $m\geq 2$, $\infty$ is a repelling fixed point of $N_f$, every immediate basin is unbounded and $\infty$ is accessible from each of the immediate basins. In fact, using the same argument used in the proof of Proposition 6, \cite{HSS2001}, it can be shown that there is exactly one access to $\infty$ in each immediate basin. Thus, by Lemma 3.3, \cite{Sym_dyn}, we get that all Fatou components other than the immediate basins are bounded.
\label{bounded-fc}
\end{rem} 
We need a lemma to prove Theorem~\ref{McM_sym}.
 \begin{lem}\label{sym_conjugate}
     Suppose $R_1$ and $R_2$ are two rational maps such that for an affine map $T$, $R_2=T^{-1}\circ R_1\circ T$. Then $\Sigma R_2=\{T^{-1} \circ \phi \circ T: \phi \in \Sigma R_1 \}$.
\end{lem}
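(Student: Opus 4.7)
My plan is to establish the two inclusions $\Sigma G \supseteq \{T^{-1}\circ \phi \circ T : \phi \in \Sigma F\}$ and $\Sigma G \subseteq \{T^{-1}\circ \phi \circ T : \phi \in \Sigma F\}$ separately, using only two basic facts: (i) since conjugation by the Möbius map $T$ intertwines the dynamics of $F$ and $G$, the Julia sets satisfy $\mathcal{J}(G) = T^{-1}(\mathcal{J}(F))$, as noted in the introduction; and (ii) the composition of affine maps is affine, and one can read off the linear coefficient explicitly.

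For the forward inclusion, I would take $\phi \in \Sigma F$, write $\phi(z) = \mu z + \alpha$ with $|\mu| = 1$, and $T(z) = az+b$ with $a \neq 0$. A one-line computation shows $T^{-1}\circ \phi \circ T(z) = \mu z + \frac{(\mu-1)b+\alpha}{a}$, so this map is again an affine isometry with the same rotational factor $\mu$, hence $|\mu|=1$. Using $\mathcal{J}(G) = T^{-1}(\mathcal{J}(F))$ together with the $\phi$-invariance of $\mathcal{J}(F)$, one immediately gets
\[
(T^{-1}\circ \phi \circ T)(\mathcal{J}(G)) = T^{-1}(\phi(\mathcal{J}(F))) = T^{-1}(\mathcal{J}(F)) = \mathcal{J}(G),
\]
so $T^{-1}\circ \phi \circ T \in \Sigma G$.

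For the reverse inclusion, given $\psi \in \Sigma G$, I would set $\phi := T \circ \psi \circ T^{-1}$ and verify $\phi \in \Sigma F$ by the same two checks: an analogous direct calculation shows $\phi$ is affine with linear coefficient again of modulus one, and the $\psi$-invariance of $\mathcal{J}(G)$ together with $\mathcal{J}(F) = T(\mathcal{J}(G))$ yields $\phi(\mathcal{J}(F)) = \mathcal{J}(F)$. Since $\psi = T^{-1}\circ \phi \circ T$ by construction, this exhibits $\psi$ in the required form and completes the proof. There is no real obstacle here; the only point that needs care is to note that $T$ being affine (rather than a general Möbius transformation) is precisely what guarantees that conjugation sends affine isometries to affine isometries while preserving the modulus of the rotational factor.
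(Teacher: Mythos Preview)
Your proof is correct. The paper itself states Lemma~\ref{sym_conjugate} without proof, so there is nothing to compare against; your two-inclusion argument using $\mathcal{J}(G)=T^{-1}(\mathcal{J}(F))$ and the explicit computation showing that conjugation by an affine map preserves the form $z\mapsto \mu z+\alpha$ with $|\mu|=1$ is exactly the routine verification the authors evidently intended the reader to supply.
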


\begin{proof}[Proof of Theorem \ref{McM_sym}]
	For  $f_\lambda(z)=z^m-\frac{\lambda}{z^n},	
	\lambda \neq 0$ and $f(z)=\frac{z^{m+n}-1}{z^n}, m+n>2$, we have
	$N_f=T^{-1}\circ N_{f_\lambda}\circ T$ (by Lemma \ref{Conj_N_Mc}) where $T(z)=(\lambda^{\frac{1}{m+n}})z$ . Using Lemma \ref{sym_conjugate}, we have $\Sigma N_f=\{T^{-1} \circ \phi \circ T: \phi \in \Sigma N_{f_\lambda} \}$. Indeed, it can be seen that  $\Sigma N_f= \Sigma N_{f_\lambda} $.

	As $m+n>2$, $N_f$ has at least three superattracting fixed points. Therefore, $\mathcal{J}(N_f)$ cannot be a line. The point at $\infty$ is either a pre-periodic point ($m=1$) or a fixed point ($m>1$) of $N_f$. By a result of Boyd (Theorem 1, \cite{Boyd2000}), we get that $\mathcal{J}(N_f)$ is not invariant under any translation. By Observation \ref{prop_N_Mc}(5), we have $\left\{z \mapsto \lambda z: \lambda^{m+n}=1\right\} \subseteq \Sigma N_f$.  Therefore, every element of $\Sigma N_f$ is a rotation about the origin. It only remains to be shown that each such rotation is of order $m+n$. 
	\par 
	For $m\geq 2$, $\infty$ is a repelling fixed point. Every $\sigma \in \Sigma N_f$ takes an unbounded Fatou component to an unbounded Fatou component of $N_f$, and the immediate basins are the only unbounded components of $\mathcal{F}(N_f)$ (see Remark \ref{bounded-fc}). These are preserved by the rotations of order $m+n$. Therefore, the order of $\sigma$ divides $(m+n)$, and hence $ \Sigma N_{f}=\left\{z \mapsto \lambda z: \lambda^{m+n}=1\right\}$.
	\par 
	If $m=1$, then via inversion $N_{1,n}$ is conjugate to $N_p$, where $p(z)=z^{n+1}-1$. Note that $N_p$ is a geometrically finite map with connected Julia set. Also, the point at $\infty$ is a repelling fixed point of $N_p$, and the immediate basins of attraction of $N_p$ corresponding to the roots of $p$ are the only unbounded Fatou components (see Figure (\ref{m1n2}(b))). Using the same argument used in the previous paragraph, we have $ \Sigma N_{p}=\left\{z \mapsto \lambda z: \lambda^{n+1}=1\right\}$. As $N_{1,n}(z)=\frac{1}{N_p(\frac{1}{z})}$, we get $ \Sigma N_{1,n}=\left\{z \mapsto \lambda z: \lambda^{n+1}=1\right\}$. This concludes the proof.
\end{proof}
\begin{rem}
	In Theorem 3.1, \cite{Ferreira2023} and Corollary 3.3,  \cite{Rot2024}, it is proved that if $f$ has no parabolic or rotation domain, then $\Sigma f=\{\sigma: \sigma(z)=\lambda z,~ \lambda^{m+n}=1\}$. Under this hypothesis, from Theorem \ref{McM_sym}, we get $\Sigma f=\Sigma N_f$.
\end{rem}

\textbf{Acknowledgement:} We express our sincere gratitude to the anonymous referee for carefully reviewing the manuscript. Soumen Pal is supported by Indian Institute of Technology Madras through a Postdoctoral Fellowship. Pooja Phogat is supported by a Senior Research Fellowship provided by the Council of Scientific and Industrial Research, Govt. of India. \\

\section{Declarations}

\subsection{Author Contribution} All authors contributed equally.

\subsection{Funding} Pooja Phogat is funded by a Senior Research Fellowship (Grant number: 09/1059(0031)/2020-EMR-I) provided by the Council of Scientific and Industrial Research, Govt. of India. Funding is not applicable to the other two authors.

\subsection{Conflicts of interest/Competing interests}
Not Applicable.
\subsection{Data Availability statement} Data sharing not applicable to this article as no datasets were generated or analyzed during the current study.

\subsection{Code availability} Not Applicable.\\

\end{document}